\newcommand{\Spec}{\operatorname{Spec}}
\newcommand{\Hom}{\operatorname{Hom}}
\newcommand{\Ker}{\operatorname{Ker}}
\theoremstyle{definition}
\newtheorem{dfn}{Definition}[section]
\theoremstyle{plain}
\newtheorem{thm}[dfn]{Theorem}
\newtheorem{prop}[dfn]{Proposition}
\newtheorem{lem}[dfn]{Lemma}
\newtheorem{cor}[dfn]{Corollary}
\newtheorem{conj}[dfn]{Conjecture}
\theoremstyle{remark}
\newtheorem{rem}[dfn]{Remark}
\newtheorem{claim}[dfn]{Claim}
\begin{document}
\title[Rational curves on del Pezzo manifolds]{The irreducibility of the spaces of rational curves on del Pezzo manifolds}
\author[Fumiya Okamura]{Fumiya Okamura}
\address{Graduate School of Mathematics, Nagoya University, Furo-cho, Chikusa-ku, Nagoya, 464-8601, Japan}
\email{fumiya.okamura.e6@math.nagoya-u.ac.jp}
\begin{abstract}
We prove the irreducibility of the spaces of rational curves on del Pezzo manifolds of Picard rank $1$ and dimension $n \ge 4$ by analyzing the fibers of evaluation maps.
As a corollary, we prove Geometric Manin's Conjecture in these cases.
\end{abstract}
\maketitle
\section{Introduction}
Throughout this paper, we work over an algebraically closed field $k$ of characteristic $0$.
A smooth projective variety $X$ is called Fano if the anticanonical divisor $-K_{X}$ is ample.
Mori devised the bend and break method and proved the following theorem about rational curves on Fano manifolds:
\begin{thm}[\cite{Mori1982}]\label{Mori}
Let $X$ be a Fano manifold of dimension $n$.
Then for any point $p \in X$, there is a rational curve $C$ on $X$ such that 
$p \in C$ and $-K_{X} \cdot C \le n+1$.
\end{thm}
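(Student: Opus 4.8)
The plan is to follow Mori's strategy: first produce a rational curve through $p$ by a deformation-theoretic dimension count, then cut its anticanonical degree down to $n+1$ by breaking it repeatedly. The dimension counts only become favourable in positive characteristic, so the first move is to reduce there. I would spread $(X,p)$ out to a smooth projective family $\mathcal X \to \Spec R$ with a section $\sigma$ over a finitely generated $\mathbb Z$-algebra $R$ whose generic fibre recovers $(X,p)$, arranged so that infinitely many closed fibres are Fano manifolds of dimension $n$ over fields of positive characteristic. It then suffices to prove the bounded statement on these fibres and descend.

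Work now on one closed fibre over a field of characteristic $p>0$, still called $X$. For a nonconstant $h \colon C \to X$ from a smooth projective curve with $h(c_0)=p$, fixing the image of one point twists the relevant bundle by $\mathcal O_C(-c_0)$ and gives
\[
\dim_{[h]} \Mor(C, X;\, c_0 \mapsto p) \ \ge\ \chi\!\left(C, h^\ast T_X(-c_0)\right) \;=\; -K_X \cdot h_*[C] - n\, g(C).
\]
For $g(C)>0$ this bound is useless, and this is exactly where positive characteristic is needed: composing $h$ with the $e$-fold Frobenius multiplies $-K_X \cdot h_*[C]$ by $p^e$ while leaving the genus of the source unchanged, so for $e \gg 0$ the deformation space fixing $p$ becomes positive-dimensional. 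A nonconstant family of morphisms out of a fixed curve keeping one point fixed cannot remain a family of morphisms: bend and break (\cite{Mori1982}) forces a degeneration whose limit cycle contains a rational curve through $p$.

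With existence secured, I would bound the degree by a second, characteristic-free application of bend and break. Among the rational curves through $p$ pick one, $f \colon \mathbb P^1 \to X$ with $f(0)=p$, of minimal degree $d := -K_X \cdot f_*[\mathbb P^1]$, and also fix a second point $f(\infty)=q \ne p$. Fixing two points twists by $\mathcal O_{\mathbb P^1}(-2)$, so
\[
\dim_{[f]} \Mor(\mathbb P^1, X;\, 0 \mapsto p,\ \infty \mapsto q) \ \ge\ d - n,
\]
whereas the reparametrisations fixing $0$ and $\infty$ form only the one-dimensional group $\mathbb G_m$. If $d \ge n+2$ the family of distinct two-pointed curves is therefore positive-dimensional, and bend and break degenerates $f_*[\mathbb P^1]$ into a connected union of at least two rational curves, each of strictly positive anticanonical degree because $-K_X$ is ample. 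The component through $p$ then has degree at most $d-1$, contradicting minimality; hence $d \le n+1$.

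It remains to descend to characteristic zero. The scheme $\Mor_{\le n+1}(\mathbb P^1, \mathcal X/R;\, \sigma)$ parametrising morphisms of anticanonical degree at most $n+1$ that send $0$ to the section $\sigma$ is of finite type over $\Spec R$, and the two previous paragraphs make its fibre nonempty over infinitely many closed points of $\Spec R$. Its image in $\Spec R$ is constructible, and a constructible subset of the integral base $\Spec R$ meeting infinitely many closed points must contain the generic point; hence the generic fibre --- the original characteristic-zero situation --- also carries such a curve. I expect the existence step to be the main obstacle: in characteristic zero the genus penalty $-n\,g(C)$ defeats the naive count, and it is precisely the Frobenius amplification, together with the rigidity packaged into bend and break, that forces the argument through positive characteristic rather than allowing a direct proof.
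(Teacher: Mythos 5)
The paper does not prove this statement: Theorem \ref{Mori} is quoted directly from \cite{Mori1982} as background, so there is no internal proof to compare against. Your argument is a faithful outline of Mori's original proof --- spreading out to positive characteristic, amplifying the degree by Frobenius so that $\chi\bigl(C,h^{*}T_{X}(-c_{0})\bigr)=-K_{X}\cdot h_{*}[C]-n\,g(C)$ becomes positive, applying bend and break once to produce a rational curve through $p$, applying it a second time with two points fixed (modulo the one-dimensional $\mathbb{G}_m$ of reparametrisations) to break any curve of degree $\ge n+2$ against minimality, and descending to characteristic zero by constructibility --- and the numerical estimates you give are all correct. The one imprecise step is the descent: a constructible subset of $\Spec R$ that merely \emph{meets infinitely many closed points} need not contain the generic point once $\dim R\ge 2$ (e.g.\ $V(x)\subset\Spec\mathbb{Z}[x]$ meets infinitely many closed points). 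What your construction actually provides is nonemptiness of the fibre of $\Mor_{\le n+1}$ over \emph{every} closed point of a dense open subset of $\Spec R$; since $R$ is a finitely generated $\mathbb{Z}$-algebra it is Jacobson, those closed points are dense, hence the constructible image is dense and contains the generic point. With that wording corrected, the argument is the standard and correct proof of Mori's theorem.
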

Hence it is natural to study the space of rational curves on Fano manifolds.
In particular, we are interested in the irreducibility of the spaces $\Hom (\mathbb{P}^{1}, X, d)$ of rational curves of degree $d \ge 1$ and their dimensions.
\par
Fano manifolds are classified by their indexes.
For a Fano manifold $X$, the index of $X$ is the largest integer $r$ such that $-K_{X} = rH$ for some ample divisor $H$ on $X$.
Such a divisor $H$ is called the fundamental divisor on $X$.
The index of a Fano manifold of dimension $n$ is at most $n + 1$ (e.g., \cite[Corollary 2.1.13]{Iskovskikh1999}).
Moreover, if the index of $X$ is $n+1$, then $X \cong \mathbb{P}^{n}$, and 
if the index of $X$ is $n$, then $X$ is isomorphic to a smooth quadric $Q^{n}$ in $\mathbb{P}^{n+1}$.
When $X = \mathbb{P}^{n}$ (resp. $Q^{n}$), the space $\mathrm{Hom}(\mathbb{P}^{1}, X, d)$ is irreducible of dimension $(n+1)d+n$ (resp. $nd+n$) for each $d\in \mathbb{Z}_{>0}$ (e.g., since $X$ is homogeneous and toric, it holds by the papers cited in the second paragraph from the bottom of Introduction).
So we will consider del Pezzo manifolds, which are Fano manifolds such that $-K_{X} = (n - 1)H$ for some ample divisor $H$.
Del Pezzo manifolds are completely classified (e.g., \cite[Theorem 3.3.1]{Iskovskikh1999}).
In particular, $X$ is a del Pezzo manifold of Picard rank $1$ if and only if $1\le H^{n}\le 5$, and in this case $X$ is one of the following:
\begin{enumerate}[(1)]
\item When $H^{n} = 1$, $X$ is a smooth sextic in $\mathbb{P}(1^{n},2,3)$,
\item When $H^{n} = 2$, $X$ is a smooth quartic in $\mathbb{P}(1^{n+1},2)$,
\item When $H^{n} = 3$, $X$ is a smooth cubic in $\mathbb{P}^{n + 1}$,
\item When $H^{n} = 4$, $X$ is a smooth complete intersection of two quadrics in $\mathbb{P}^{n + 2}$, 
\item When $H^{n} = 5$, $X$ is a smooth linear section of the Grassmannian $\mathrm{Gr}(2,5) \subset \mathbb{P}^{9}$. 
\end{enumerate}
In this paper, we study the spaces of rational curves on del Pezzo manifolds of dimension $n \ge 4$.
Our aim is to prove the following theorem:
\begin{thm}\label{main}
Let $X$ be a del Pezzo manifold of Picard rank $1$ and dimension $n \ge 4$ with an ample generator $H$. 
For any integer $d \ge 1$, the Kontsevich space $\overline{M}_{0,0}(X,d)$ parametrizing rational curves of $H$-degree $d$ is irreducible of the expected dimension $(n-1)d + n - 3$.
\end{thm}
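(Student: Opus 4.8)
The plan is to induct on the degree $d$ and, at each stage, to reduce the irreducibility of $\overline{M}_{0,0}(X,d)$ to a statement about the fibers of the evaluation map. Writing $\mathrm{ev}\colon \overline{M}_{0,1}(X,d)\to X$ for the one–pointed evaluation morphism, I would carry along the stronger inductive hypothesis that for every $d'<d$ the space $\overline{M}_{0,0}(X,d')$ is irreducible of the expected dimension, that its general member is a free curve, and that the general fibre of $\mathrm{ev}$ (that is, the family of degree-$d'$ rational curves through a fixed general point of $X$) is irreducible. Since the forgetful map $\overline{M}_{0,1}(X,d)\to\overline{M}_{0,0}(X,d)$ has connected fibres and $X$ is irreducible, it suffices to show that $\mathrm{ev}$ is dominant with irreducible general fibre; the dimension count then follows from the freeness of the generic curve.

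\emph{Base case $d=1$.} Here $\overline{M}_{0,0}(X,1)$ is the Fano scheme of lines on $X$, and I would verify case by case, using the descriptions (1)--(5), that it is irreducible of dimension $2n-4=(n-1)+n-3$ and that the lines through a general point $p\in X$ form an irreducible family $\Sigma_p$ of dimension $n-3$. This is precisely where the hypothesis $n\ge 4$ is used: for $n\ge 4$ the family $\Sigma_p$ is positive–dimensional, so the lines through $p$ sweep out a positive–dimensional subvariety, which is the engine of the gluing argument below; for $n=3$ only finitely many lines pass through a general point and the mechanism fails.

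For the \emph{inductive step} I would first bound the dimension of every component from below by deformation theory: a curve $C$ with $H\cdot C=d$ satisfies $-K_X\cdot C=(n-1)d$, so $\chi(f^{*}T_X)=(n-1)d+n$ and hence every component of $\overline{M}_{0,0}(X,d)$ has dimension at least $(n-1)d+n-3$; showing that a generic member of each component is free then forces equality and unobstructedness. To prove irreducibility I would analyse the boundary of the general fibre $F_p=\mathrm{ev}^{-1}(p)$. A general stable map in $F_p$ of degree $\ge 2$ degenerates, by bend-and-break, into the boundary; the distinguished boundary stratum $B\subset F_p$ consists of a degree-$(d-1)$ curve through $p$ with a line attached at a moving point, and $B$ is irreducible because it fibres over the (irreducible, by induction) fibre $F_p^{(d-1)}$ with fibres given by the irreducible family of lines through the attachment point. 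Since freeness makes these reducible chains smoothable, $B$ lies in the closure of the free locus of a single component; and since every component of the projective variety $F_p$ meets the boundary and, by a connectedness and dimension count, meets $B$, it follows that $F_p$ is irreducible. This propagates the three inductive assertions to degree $d$.

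The hard part will be controlling the non-free and reducible loci so as to guarantee that \emph{every} component of $F_p$ is captured by the distinguished stratum $B$. Concretely, one must rule out spurious components consisting of curves that never deform to meet $B$ --- for instance multiple covers, curves contracted into special subvarieties, or families of excess dimension --- and one must show that the various boundary strata $\Delta_{d_1,d_2}$ are themselves irreducible and mutually connected. For the del Pezzo manifolds of small degree ($H^{n}=1,2$, the weighted sextic and quartic) the scarcity of lines makes the irreducibility of $\Sigma_p$ and the smoothing of chains the most delicate points, and these cases are likely to require geometric input specific to each model in (1)--(5).
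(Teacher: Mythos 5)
Your overall strategy --- induct on $d$, with lines as the base case and a bend-and-break degeneration into a distinguished boundary stratum as the inductive step --- is the same as the paper's, which runs the Coskun--Starr induction (Proposition \ref{induction}) and then uses Movable Bend and Break (Theorem \ref{MBB}) to degenerate a free curve to a chain of $d$ free lines, whose locus $\Delta$ is irreducible precisely because the general fiber of $\mathrm{ev}_1$ is. The main difference is that you peel off one line at a time inside a fixed fiber $F_p$, whereas the paper degenerates globally all the way down to chains of lines; your version must then show that \emph{every} component of $F_p$ meets the specific stratum $B=\Delta_{d-1,1}$ with the marked point on the degree-$(d-1)$ piece, a step you flag as ``the hard part'' but do not carry out.

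Beyond that, there are two genuine gaps. First, your inductive hypothesis is too weak: knowing that the \emph{general} fiber of $\mathrm{ev}_1$ is irreducible of dimension $n-3$ does not suffice. Both the statement that every component of $\overline{M}_{0,0}(X,d)$ is generically free of the expected dimension and the codimension-$\ge 2$ estimate on the bad boundary strata (which is what forces a free curve to degenerate to a chain of \emph{free} pieces rather than into a spurious component) require that $\dim\mathrm{ev}_1^{-1}(p)\le n-2$ for \emph{every} $p$ and that the jumping locus $S$ is \emph{finite}; this is exactly Theorem \ref{line}.(2), and if $S$ were a curve the dimension counts in Proposition \ref{induction} and Theorem \ref{MBB} would break. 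Relatedly, ``it suffices to show that $\mathrm{ev}$ is dominant with irreducible general fibre'' silently assumes no component of $\overline{M}_{0,1}(X,d)$ has non-dominant evaluation; ruling these out needs either those fiber-dimension bounds or the $a$-invariant argument (Proposition \ref{expected} together with Lemma \ref{largea}). Second, the base case, which you dispose of with ``verify case by case,'' is the bulk of the paper: for $H^n=1,2$ the fundamental divisor is not very ample, so there is no tangent-hyperplane-section argument available, and one has to work through the double cover $f\colon X\to\mathbb{P}^n$ (resp.\ $\mathbb{P}(1^n,2)$), characterize the points over which the fiber of $\mathrm{ev}_1$ jumps (cone points of the branch quartic, and the analysis via the divisors $D_p$ and $D^C$), and prove finiteness of $S$ using Lemma \ref{sectionirr} and the isolated-singularity Lemmas \ref{isolated1} and \ref{isolated3}. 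As written, your proposal is a correct outline of the architecture but defers essentially all of the content that makes the theorem true.
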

For the definition of the Kontsevich spaces, see for example \cite{Fulton1997}.
For the proof of this theorem, we mainly follow the methods in \cite{Coskun2009}, \cite{Lehmann2019}.
In fact, \cite{Coskun2009} proved Theorem \ref{main} for any smooth cubic hypersurface of dimension $\ge 4$, and by a similar argument, it is easily proved the theorem when $H^{n} \ge 4$.
Our main ingredient is to analyze the fibers of the evaluation map $\mathrm{ev_{1}}:\overline{M}_{0,1}(X, 1) \rightarrow X$ for the case when $H^{n} \le 2$.
One of the main difficulties for these cases is that the fundamental divisor is not very ample. 
Our idea is to study the subvarieties which are covered by lines through a fixed point instead of the fibers of $\mathrm{ev}_{1}$.
For the case of degree $2$, we conduct a little more precise study:
For a point contained in the ramification locus of the double cover $f \colon X \rightarrow \mathbb{P}^{n}$, we give a necessary and sufficient condition for the fiber of $\mathrm{ev}_{1}$ to have the expected dimension. 
Using the above result, we can run the induction on the degree $d$ as in the above references. 
The key lemma for the induction step is the movable bend and break, which asserts that any free curve on $X$ deforms to a chain of free lines.
Note that we can focus on the study of lines thanks to the inequality in Theorem \ref{Mori} since $(n-1)H\cdot C \le n+1$ and $n\ge 4$ yield that $H\cdot C = 1$.
\par
The study of the irreducibility of $\Hom (\mathbb{P}^{1}, X, d)$ and their dimensions is motivated by Geometric Manin's Conjecture.
This conjecture predicts the growth rate of the number of components and their dimension as the degree $d$ grows.
For the precise formulation of the asymptotic formula, we need to remove the exceptional subsets.
\cite{Lehmann2019} conjectured that this exceptional set is defined by two birational invariants $a$ and $b$.
These invariants are defined in Section 2.
Subvarieties with large $a$ and $b$ invariants would contain more rational curves than we expect.
However, we prove that any del Pezzo manifold has no subvariety with large $a$ invariant.
Hence the situation is rather simple.
For details about Geometric Manin's Conjecture, see \cite{Tanimoto2021}.
\par 
The results about the number of components and their dimensions are known in various cases: 
smooth Fano hypersurfaces of certain degree ranges in  \cite{Beheshti2013}, \cite{Browning2017}, \cite{Coskun2009}, \cite{Harris2004}, \cite{Riedl2019}, 
homogeneous varieties in \cite{Kim2001}, \cite{Thomsen1998}, 
Toric varieties in \cite{Bourqui2012}, \cite{Bourqui2016}, 
del Pezzo surfaces in \cite{Beheshti2023}, \cite{Testa2005}, \cite{Testa2009}, 
smooth Fano threefolds in \cite{Beheshti2022}, \cite{Burke2022}, \cite{Lehmann2019}, \cite{Lehmann2021a}, \cite{Shimizu2022}, 
del Pezzo fibrations in \cite{Lehmann2022a}, \cite{Lehmann2023}, and so on.
\par
The paper is organized as follows.
In Section 2, we give the definitions of two invariants $a$ and $b$, and prove several properties about $a$-invariants on del Pezzo manifolds.
Section 3 is the main part of this paper, we study the spaces of lines on del Pezzo manifolds passing through a fixed point.
We mainly focus on del Pezzo manifolds of degree $\le 2$.
In Section 4, we run the induction step and prove the main theorem.
The arguments are automatic by \cite{Coskun2009} and \cite{Lehmann2019}.
In Section 5,  as a corollary of Theorem \ref{main}, we prove Geometric Manin's Conjecture for our case.
\section{$a$-, $b$-invariants}
In this section, we will introduce the $a$-invariant and the $b$-invariant.
Both invariants play an essential role to count the number of components of the scheme $\Hom (\mathbb{P}^{1}, X)$ parametrizing morphisms from $\mathbb{P}^{1}$ to a smooth projective variety $X$. 
For details about $\Hom (\mathbb{P}^{1}, X)$, see \cite[I.1]{Kollar1996}.
\par
We let $N^{1}(X)$ be the set of $\mathbb{R}$-Cartier divisors modulo numerical equivalence.
Let $\overline{\mathrm{Eff}}^{1}(X)$ be the set of pseudo-effective $\mathbb{R}$-divisors on $X$, which are elements in the closure of the set of classes of effective $\mathbb{R}$-divisors, and 
let $\mathrm{Nef}^{1}(X)$ be the set of classes of nef $\mathbb{R}$-divisors on $X$.
Similarly, let $N_{1}(X)$ be the set of $\mathbb{R}$-$1$-cycles modulo numerical equivalence.
Let $\overline{\mathrm{Eff}}_{1}(X)$ be the set of pseudo-effective $\mathbb{R}$-$1$-cycles on $X$, and 
let $\mathrm{Nef}_{1}(X)$ be the set of classes of nef $\mathbb{R}$-$1$-cycles on $X$. 
\begin{dfn}[\cite{Lehmann2019} Definition 3.1]
Let $X$ be a projective manifold and $L$ be a nef and big $\mathbb{Q}$-divisor on $X$.
Then the $a$-invariant, or the Fujita invariant, $a(X, L)$ is defined by
\[a(X,L) = \inf \{t \in \mathbb{R} \mid K_{X} + tL \in \overline{\mathrm{Eff}}^1 (X)\}.\]
This is a birational invariant by \cite[Proposition 2.7]{Hassett2015}.
Hence we define $a(X, L)$ for a singular variety $X$ by
\[a(X, L) = a(\tilde{X}, \phi^{*}L),\]
where $\phi \colon \tilde{X} \rightarrow X$ is a smooth resolution.
\end{dfn}
By \cite{Boucksom2013}, $a(X, L) > 0$ if and only if $X$ is uniruled.
\begin{prop}[\cite{Lehmann2019} Proposition 4.2]\label{expected}
Let $X$ be a projective uniruled manifold and let $L$ be a nef and big divisor on $X$.
For $\alpha \in \overline{\mathrm{Eff}}_{1}(X)$ vanishing against $K_{X} + a(X, L)L$, we take a component $M \subset \Hom (\mathbb{P}^{1}, X)$ parametrizing morphisms $f$ such that $f_{*}\mathbb{P}^{1} = \alpha$.
Consider the evaluation map 
\[\mathrm{ev} \colon \mathbb{P}^{1} \times M \rightarrow X. \]
If $\mathrm{ev}$ is not dominant, then the closure $Y$ of the image of $\mathrm{ev}$ has an $a$-value $a(Y, L) > a(X, L)$.
\end{prop}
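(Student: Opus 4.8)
The plan is to estimate $\dim M$ in two ways and compare. Write $a := a(X,L)$. The hypothesis that $\alpha$ vanishes against $K_X + aL$ means $(K_X + aL)\cdot\alpha = 0$, i.e. $-K_X\cdot\alpha = a\,(L\cdot\alpha)$. Since $M$ is a component of $\Hom(\mathbb{P}^1, X)$, the standard lower bound on the dimension of the space of morphisms (Riemann--Roch applied to $f^{*}T_X$ on $\mathbb{P}^1$) gives
\[
\dim M \ \ge\ -K_X\cdot\alpha + \dim X .
\]
The goal is then to produce a competing upper bound for $\dim M$ in terms of the geometry of $Y$, and to feed the gap $\dim X - \dim Y \ge 1$ into the definition of the $a$-invariant.

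For the upper bound I would pass to a resolution $\phi\colon \tilde Y \to Y$ and take strict transforms. Since $\mathrm{ev}$ is not dominant, every curve parametrized by $M$ lies in $Y$, and a general such curve is not contained in the union of $\mathrm{Sing}(Y)$ with the exceptional locus of $\phi$; hence it lifts to $\tilde Y$. This sets up a birational correspondence between $M$ and a family $\tilde M$ of rational curves on $\tilde Y$, with a fixed class $\tilde\alpha$ satisfying $\phi_{*}\tilde\alpha = \alpha$, so that $\dim\tilde M = \dim M$ and $\tilde M$ dominates $\tilde Y$. Because we work in characteristic $0$, the general member of a dominating family of rational curves is free (see \cite[II.3]{Kollar1996}); at a free curve $\Hom(\mathbb{P}^1,\tilde Y)$ is smooth of dimension equal to the expected one, so a general $[\tilde f]\in\tilde M$ is a smooth point of $\Hom(\mathbb{P}^1,\tilde Y)$ of local dimension $-K_{\tilde Y}\cdot\tilde\alpha + \dim\tilde Y$, whence
\[
\dim M = \dim\tilde M \ \le\ -K_{\tilde Y}\cdot\tilde\alpha + \dim\tilde Y .
\]
Setting $\tilde L := \phi^{*}(L|_Y)$, the projection formula gives $\tilde L\cdot\tilde\alpha = L\cdot\alpha$.

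Combining the two bounds and using $\dim\tilde Y = \dim Y \le \dim X - 1$, I obtain
\[
-K_X\cdot\alpha + \dim X \ \le\ -K_{\tilde Y}\cdot\tilde\alpha + \dim X - 1 ,
\]
so that $-K_{\tilde Y}\cdot\tilde\alpha > -K_X\cdot\alpha = a\,(L\cdot\alpha) = a\,(\tilde L\cdot\tilde\alpha)$; that is, $(K_{\tilde Y} + a\tilde L)\cdot\tilde\alpha < 0$. Now $\tilde\alpha$ is the class of a dominating family of rational curves, hence movable, and by the duality of \cite{Boucksom2013} between the pseudo-effective and movable cones a pseudo-effective divisor is non-negative on movable classes. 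Therefore $K_{\tilde Y} + a\tilde L \notin \overline{\mathrm{Eff}}^{1}(\tilde Y)$, and since $\tilde L$ is nef the set of $t$ with $K_{\tilde Y} + t\tilde L$ pseudo-effective is an interval $[\,a(\tilde Y,\tilde L),\infty)$; thus $a < a(\tilde Y,\tilde L)$. As the $a$-invariant is a birational invariant, $a(Y,L) = a(\tilde Y,\tilde L) > a(X,L)$, as desired.

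I expect the main obstacle to be the upper-bound step: rigorously producing the family $\tilde M$ on the resolution, controlling the loci where general curves could fail to lift or fail to be free, and verifying $\dim\tilde M = \dim M$ through the birational correspondence. One must also ensure that $L|_Y$ is still big on $Y$ so that $a(Y,L)$ is defined in the usual sense; this is automatic in the applications of this paper, where $L = H$ is ample and hence $H|_Y$ is ample. By contrast, the lower bound and the concluding cone-duality argument are comparatively formal.
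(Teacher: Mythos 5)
Your argument is correct, and it is essentially the standard proof: the paper itself does not prove this proposition but imports it from \cite{Lehmann2019}*{Proposition 4.2}, whose proof proceeds exactly as you do --- the deformation-theoretic lower bound $\dim M \ge -K_X\cdot\alpha+\dim X$, lifting the family to a resolution of $Y$ where the general member is free (hence the upper bound $-K_{\tilde Y}\cdot\tilde\alpha+\dim\tilde Y$), and the observation that a class of a dominating family pairs non-negatively with every pseudo-effective divisor. Your two caveats are also handled correctly: the lift/freeness step works because the non-isomorphism locus cannot contain all curves of the dominating family, and if $L|_Y$ fails to be big the infimum defining $a(Y,L)$ is over the empty set, so the conclusion holds vacuously.
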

\begin{dfn}[\cite{Lehmann2019} Definition 3.4]
Let $X$ be a projective uniruled manifold and let $L$ be a nef and big $\mathbb{Q}$-divisor.
Then a generically finite dominant morphism $f\colon Y \rightarrow X$ of degree $\ge 2$ from a projective variety $Y$ is an $a$-cover if $a(Y,f^{*}L) = a(X, L)$.
\end{dfn}
\begin{rem}
Let $X$ be a del Pezzo manifold of dimension $n$ and let $H$ be the fundamental divisor.
Then, by definition, we have $a(X, H) = n-1$.
\end{rem}
On del Pezzo manifolds, $a$-invariants behave well:
\begin{lem}\label{largea}
Let $X$ be a del Pezzo manifold of Picard rank $1$ and dimension $n \ge 4$ with an ample generator $H$. Then $X$ has no subvarieties $Y$ such that $a(Y, H) > a(X, H)$.
\end{lem}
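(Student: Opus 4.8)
The plan is to show that every proper subvariety $Y \subsetneq X$ satisfies $a(Y, H|_{Y}) \le n-1 = a(X, H)$. I fix a resolution $\phi \colon \tilde{Y} \to Y$ and set $L = \phi^{*}(H|_{Y})$, which is nef and big since $H|_{Y}$ is ample; by the definition of the $a$-invariant for singular varieties, $a(Y, H|_{Y}) = a(\tilde{Y}, L)$, and moreover $L^{m} = (H|_{Y})^{m} \ge 1$, where $m = \dim Y$. The basic input is the standard upper bound $a(\tilde{Y}, L) \le m+1$ for a big and nef divisor, with equality exactly when $(\tilde{Y}, L) \cong (\mathbb{P}^{m}, \mathcal{O}(1))$. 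This already settles the case $m \le n-2$, since then $a(Y, H|_{Y}) \le m+1 \le n-1$.

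It remains to treat prime divisors, $m = n-1$, where the general bound only yields $a \le n$. The clean case is when $Y$ is smooth, or more generally has canonical singularities. Since $Y$ is a Cartier divisor in the smooth variety $X$, it is Gorenstein (indeed a local complete intersection), and writing $Y \sim d_{Y}H$ with $d_{Y} \ge 1$, adjunction gives $\omega_{Y} \cong \mathcal{O}_{Y}((d_{Y}-n+1)\,H|_{Y})$. Writing $K_{\tilde{Y}} = \phi^{*}\omega_{Y} + \sum_{i} a_{i} E_{i}$ with $a_{i}$ the discrepancies, I obtain
\[K_{\tilde{Y}} + (n-1)L = \phi^{*}(d_{Y}\,H|_{Y}) + \sum_{i} a_{i} E_{i}.\]
When $Y$ has canonical singularities, all $a_{i} \ge 0$, so the right-hand side is the sum of a nef and big class and an effective class, hence pseudoeffective; therefore $a(Y, H|_{Y}) \le n-1$ (in fact $a(Y,H|_Y) = n-1-d_{Y} \le n-2$ when $Y$ is smooth).

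The main obstacle is the case of worse-than-canonical singular prime divisors, where some discrepancies are negative and the displayed class is not obviously pseudoeffective, together with the borderline value $a(Y, H|_{Y}) = n$. For the borderline, the equality case of the general bound forces $(\tilde{Y}, L) \cong (\mathbb{P}^{n-1}, \mathcal{O}(1))$; in particular $L^{n-1} = 1$, so $d_{Y} = 1$ and $H^{n} = 1$, and $Y$ would be birational to $\mathbb{P}^{n-1}$ via $|H|_{Y}|$. I plan to exclude this, and more generally the entire range $a(Y, H|_{Y}) > n-1$, by the sharper dichotomy that $a(\tilde{Y}, L) > n-1$ forces the minimal-degree condition $L^{n-1} = 1$, i.e.\ that $Y$ is a (possibly singular) subvariety of minimal degree birational to a linear $\mathbb{P}^{n-1}$.

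One then checks, case by case over the five classes of $X$, that $X$ contains no such subvariety for $n \ge 4$. For the cubic, the complete intersection of two quadrics, and the quintic linear section of $\mathrm{Gr}(2,5)$, this follows from the known nonexistence of linearly embedded $(n-1)$-planes on these varieties (the relevant Fano scheme of $(n-1)$-planes is empty once $n \ge 4$, by the standard dimension count). For degrees $1$ and $2$ one uses the weighted-projective and double-cover descriptions together with the adjunction computation above to see that a prime divisor in $|H|$ is again a del Pezzo-type variety of the same small degree rather than a rational variety of minimal degree. Carrying out this exclusion across the five families, and establishing the minimal-degree dichotomy that reduces to it, is the technical heart of the argument; the earlier steps are formal once the general upper bound and adjunction are in hand.
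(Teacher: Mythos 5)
Your route is genuinely different from the paper's, which slices $X$ by $n-3$ general hyperplanes, uses \cite{Lehmann2019a}*{Theorem 3.15} to transport any subvariety with $a(Y,H)>a(X,H)$ down to a del Pezzo threefold, and then quotes the classification of such subvarieties on del Pezzo threefolds from \cite{Lehmann2018}*{\S 6.3}. The formal parts of your argument are fine: the bound $a(\tilde Y,L)\le \dim Y+1$ disposes of codimension $\ge 2$, and the adjunction identity $K_{\tilde Y}+(n-1)L=\phi^{*}(d_{Y}H|_{Y})+\sum_{i}a_{i}E_{i}$ disposes of prime divisors with canonical singularities. But the proof stops exactly where the difficulty begins. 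The ``minimal-degree dichotomy'' --- that $a(\tilde Y,L)>\dim\tilde Y$ forces $L^{\dim\tilde Y}=1$, or in the stronger form you actually need, that $(\tilde Y,L)$ is birationally $(\mathbb{P}^{n-1},\mathcal{O}(1))$ --- is asserted without proof or reference, and it is precisely the statement that handles non-canonical prime divisors; you yourself flag it as ``the technical heart.'' A statement of this type is in the spirit of \cite{Lehmann2019a}*{Lemma 3.16} and Fujita's nef-value classification (via $a\le\tau$), but for $L$ merely big and nef on a resolution it requires a citation or an argument, neither of which is supplied.

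Second, even granting the dichotomy in the form $L^{n-1}=1$, the case analysis is misdirected. Since $\Pic(X)=\mathbb{Z}H$, any prime divisor satisfies $Y\sim d_{Y}H$ with $d_{Y}\ge 1$, so $L^{n-1}=(H|_{Y})^{n-1}=d_{Y}H^{n}$; for $H^{n}\ge 2$ this is at least $2$ and the dichotomy alone concludes --- no Fano schemes of $(n-1)$-planes are needed (and a linearly embedded $(n-1)$-plane could not be a divisor on the cubic anyway, as divisors there have degree $3d_{Y}$). The one case the degree count does not kill is $H^{n}=1$, $d_{Y}=1$, and there your proposal offers only the remark that such a divisor is ``again a del Pezzo-type variety,'' which is exactly the adjunction argument that fails when $Y$ is non-canonical. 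To close this you would need either the stronger birational form of the dichotomy together with the section count $h^{0}(Y,H|_{Y})=n-1<n=h^{0}(\mathbb{P}^{n-1},\mathcal{O}(1))$, or a direct control of the singularities of members of $|H|$ (compare Lemma \ref{isolated1}). As written, the argument is a plausible program rather than a proof; the paper's reduction to the threefold case bypasses all of these issues.
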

\begin{proof}
If the codimension of a subvariety $Y$ is greater than $1$, then $a(Y, H) \le n-1 = a(X, H)$ by \cite[Lemma 3.16]{Lehmann2019a}. 
Hence we may assume that $Y$ has codimension $1$.
By \cite[Theorem 3.15]{Lehmann2019a}, for general $n-3$ hyperplanes $H_{1}, \dots , H_{n-3}$, the section $X' := X \cap H_{1}\cap \dots \cap H_{n-3}$ is a smooth del Pezzo threefold. 
If there exists a subvariety $Y$ such that $a(Y, H) > a(X, H)$, 
by \cite[Theorem 3.15]{Lehmann2019a} again, we obtain a subvariety $Y' := Y \cap H_{1}\cap \dots \cap H_{n-3} \subset X'$ such that $a(Y', H) > a(X', H)$.
However, del Pezzo threefolds have no such subvarieties by \cite[\S 6.3]{Lehmann2018}.
\end{proof}
\begin{cor}
Let $X$ be a del Pezzo manifold of Picard rank $1$ and dimension $n \ge 4$.
For any $d \ge 1$ and any component $M \subset \Hom (\mathbb{P}^{1}, X, d)$, 
the evaluation map $\mathbb{P}^{1} \times M \rightarrow X$ is dominant.
\end{cor}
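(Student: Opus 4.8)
The plan is to derive this directly from Proposition \ref{expected} combined with Lemma \ref{largea}. The crucial observation is that on a del Pezzo manifold the numerical constraint appearing in Proposition \ref{expected} is vacuous, so the proposition applies to every component at once.

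First I would record the relevant numerics. Writing $H$ for the ample generator, the definition of a del Pezzo manifold gives $-K_{X} = (n-1)H$, while the Remark above gives $a(X, H) = n-1$. Hence
\[
K_{X} + a(X, H)\,H = -(n-1)H + (n-1)H = 0
\]
in $N^{1}(X)$. Consequently every class $\alpha \in \overline{\mathrm{Eff}}_{1}(X)$ vanishes against $K_{X} + a(X, H)H$, since pairing a $1$-cycle against the zero divisor class yields $0$. In particular the condition is satisfied for the class $f_{*}\mathbb{P}^{1}$ attached to any morphism $f$ parametrized by a component of $\Hom(\mathbb{P}^{1}, X, d)$.

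Next I would check the remaining hypotheses and invoke the proposition. The variety $X$ is Fano, hence uniruled, and $H$ is ample, hence nef and big, so Proposition \ref{expected} applies with $L = H$. Fix a component $M \subset \Hom(\mathbb{P}^{1}, X, d)$; since $X$ has Picard rank $1$, all morphisms in $M$ have the same numerical class $\alpha$ (determined by $\alpha \cdot H = d$), so $M$ is indeed a component of the type required by Proposition \ref{expected}. If the evaluation map $\mathbb{P}^{1} \times M \to X$ were not dominant, the closure $Y$ of its image would be a subvariety with $a(Y, H) > a(X, H)$.

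Finally, this conclusion contradicts Lemma \ref{largea}, which asserts that $X$ has no subvariety $Y$ with $a(Y, H) > a(X, H)$. Therefore the evaluation map must be dominant, as claimed. I do not expect any genuine obstacle in this step: the entire difficulty has been front-loaded into Lemma \ref{largea}, and the corollary is simply its combination with the general mechanism of Proposition \ref{expected}, unlocked by the identity $K_{X} + a(X, H)H = 0$ which removes the vanishing hypothesis on $\alpha$.
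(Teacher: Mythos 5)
Your proof is correct and is exactly the argument the paper intends (the corollary is stated without proof, immediately after Lemma \ref{largea}, precisely because it follows by combining that lemma with Proposition \ref{expected} via the identity $K_{X}+a(X,H)H=0$). Nothing to add.
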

The second invariant $b$ is not so important for del Pezzo manifolds.
However, we need to define it in order to formulate Geometric Manin's Conjecture.
\begin{dfn}[\cite{Lehmann2019} Definition 3.2]
Let $X$ be a projective uniruled manifold.
Let $L$ be a nef and big $\mathbb{Q}$-divisor on $X$.
We define the $b$-invariant by
\[b(X, L) = \dim F(X, L), \]
where $F(X, L) = \{\alpha \in \mathrm{Nef}_{1}(X) \mid (K_{X} + a(X, L)L) \cdot \alpha = 0\}$. 
This is also a birational invariant by \cite[Proposition 2.10]{Hassett2015}.
Thus we define $b(X, L)$ for a singular variety $X$ by
\[b(X, L) = b(\tilde{X}, \phi^{*}L),\]
where $\phi \colon \tilde{X} \rightarrow X$ is a smooth resolution.
\end{dfn}
\begin{rem}
The two invariants $a, b$ appeared in Manin's Conjecture.
For the notions of adelic line bundles and the height functions, see for example \cite{Chen2020}.
Let $X$ be a smooth Fano variety over a number field $F$ and $\mathcal{L} = (L, \| \cdot \|)$ be a nef and big line bundle with an adelic metrization. 
For each integer $T \ge 1$ and a subset $Q \subset X(F)$ , define the counting function by
\[N(Q, \mathcal{L}, T) = \#\{x \in Q \mid H_{\mathcal{L}}(x) \le T\},\]
where $H_{\mathcal{L}}$ is the height function associated to $\mathcal{L}$.
Then Manin's Conjecture asserts that there is a thin set $Z \subset X(F)$ such that 
\[N(X(F) \setminus Z, \mathcal{L}, T) \sim cT^{a(X, L)}(\log T)^{b(X, L) - 1},\quad \mbox{as}\quad T \rightarrow \infty,\]
where $c = c(F, X(F)\setminus Z, \mathcal{L})$ is Peyre's constant introduced in \cite{Batyrev1998}, \cite{Peyre1995}.
The references \cite{Batyrev1990}, \cite{Batyrev1998}, \cite{Franke1989}, \cite{Peyre1995} contributed significantly to the formulation. 
The thin set $Z \subset X(F)$ was described conjecturally by using the invariants $a, b$ in \cite{Lehmann2022}, \cite{Lehmann2017}. 
The reference \cite{Lehmann2019} was inspired by them and stated a version of Manin's conjecture for rational curves.
\end{rem}
\begin{dfn}[\cite{Lehmann2019} Definition 3.5]
Let $X$ be a projective uniruled manifold and let $L$ be a nef and big $\mathbb{Q}$-divisor. 
Then an $a$-cover $f \colon Y \rightarrow X$ is a face contracting morphism if the induced map $f_{*}\colon F(Y, f^{*}L) \rightarrow F(X, L)$ is not injective.
\end{dfn}
\section{Lines through a fixed point}
We recall the notion of free rational curves, which will be frequently used in the remaining sections. See \cite[II.3]{Kollar1996} for several results on free curves. 
\begin{dfn}
    Let $X$ be a smooth projective variety. 
    We say that a nonconstant morphism $f\colon \mathbb{P}^{1}\rightarrow X$ is free if $H^{1}(\mathbb{P}^{1}, f^{*}T_{X}(-1)) = 0$.
\end{dfn}
In this section, we prove the base case of Theorem \ref{main}.
For proving the induction step, we need to study fibers of the evaluation map of the space of lines:
\begin{thm}\label{line}
Let $X$ be a del Pezzo manifold of Picard rank $1$ and dimension $n \ge 4$ with an ample generator $H$. 
Let $\mathrm{ev}\colon \overline{M}_{0, 1}(X, 1) \rightarrow X$ be the evaluation map.
Then 
\begin{enumerate}[(1)]
\item 
A general fiber of $\mathrm{ev}$ is irreducible. 
\item 
There is a finite subset $S$ of $X$ such that
  \begin{itemize}
  \item If $p \notin S$, then $\dim (\mathrm{ev}^{-1}(p)) = n-3$.
  \item If $p \in S$, then $\dim (\mathrm{ev}^{-1}(p)) \le n-2$.
  \end{itemize}
\end{enumerate}
In particular, $\overline{M}_{0, 0}(X, 1)$ is irreducible of dimension $2n-4$.
\end{thm}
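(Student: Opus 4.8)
The plan is to identify, for each $p\in X$, the fiber $\mathrm{ev}^{-1}(p)$ with the variety of lines on $X$ through $p$, and then to compute this variety directly from the explicit models in the classification of the introduction. The numerology is forced: since $\overline{M}_{0,1}(X,1)$ should have dimension $2n-3$ and $\mathrm{ev}$ is dominant, a general fiber must be $(n-3)$-dimensional. I would split into the very ample cases $H^{n}\in\{3,4,5\}$, which follow the methods of \cite{Coskun2009}, and the double cover cases $H^{n}\in\{1,2\}$, where the real work lies because $H$ is no longer very ample.

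For $H^{n}=3$ and $H^{n}=4$, where $|H|$ embeds $X$ as a cubic, respectively as a $(2,2)$ complete intersection, a line through $p$ is an honest line in the ambient projective space and so is determined by a direction $v\in\mathbb{P}(T_{p}X)\cong\mathbb{P}^{n-1}$. Substituting $p+tv$ into the defining equations and demanding vanishing in $t$ realizes $\mathrm{ev}^{-1}(p)$ as a complete intersection of type $(2,3)$, respectively $(2,2)$, inside $\mathbb{P}^{n-1}$. For general $p$ this has the expected dimension $n-3$, and since $n\ge 4$ makes it positive-dimensional inside a $\mathbb{P}^{n-1}$ with $n-1\ge 3$ it is connected, hence irreducible; this reduces part (1) to the standard connectedness of positive-dimensional complete intersections together with smoothness for general $p$. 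The set $S$ is the locus where these forms fail to cut out a complete intersection, and I would prove its finiteness by showing that a positive-dimensional family of such points would force $X$ to contain a linear subspace of excessive dimension or to be singular. The case $H^{n}=5$ I would handle separately and more easily through the homogeneous geometry of $\mathrm{Gr}(2,5)$, where lines through a point are described directly.

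The substantive cases are $H^{n}\le 2$, where $f\colon X\to\mathbb{P}^{n}$ (resp. $X\to\mathbb{P}(1^{n},2)$) is a double cover branched along a quartic (resp. sextic). An $H$-line $\ell$ satisfies $H\cdot\ell=1$, so $f$ maps $\ell$ isomorphically onto a line $L$ in the base; writing $X$ as $w^{2}=g(x)$, such a lift exists exactly when the restriction $g|_{L}$ is a perfect square of a form of half the degree. Thus lines on $X$ correspond to bitangent lines of the branch divisor, and lines through $p$ to bitangent lines through $f(p)$ carrying the correct sheet. The perfect-square condition is two conditions on the binary form $g|_{L}$, so among the $\mathbb{P}^{n-1}$ of lines through $f(p)$ the bitangent ones form an $(n-3)$-dimensional locus for general $p$, matching the expected dimension, and irreducibility of the general fiber reduces to connectedness of this bitangent locus. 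I expect the main obstacle to be the points $p$ lying over the branch divisor: there $f(p)$ is a ramification point, the two sheets collide, and the perfect-square condition degenerates. For these I would carry out a separate infinitesimal analysis to give a sharp criterion for $\dim\mathrm{ev}^{-1}(p)=n-3$ and to prove that the points where the dimension jumps to $n-2$ are finite in number; this, and the parallel but weighted analysis for the sextic model when $H^{n}=1$, is the technical heart of the argument.

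Finally, to deduce the concluding statement, I would combine (1) and (2) as follows. By the Corollary to Lemma \ref{largea} every component of the space of lines has dominant evaluation onto $X$; alternatively, this also follows from (2), since the lower bound that every component of $\overline{M}_{0,0}(X,1)$ has dimension at least the expected value $2n-4$ is incompatible with a component mapping to a proper subvariety $W\subsetneq X$ once one bounds its dimension by $\dim W+\max_{p}\dim\mathrm{ev}^{-1}(p)$ and uses that the fibers drop to $n-3$ away from the finite set $S$. Thus every component of $\overline{M}_{0,1}(X,1)$ dominates $X$, and since a general fiber of $\mathrm{ev}$ is irreducible by (1), two distinct dominating components cannot coexist; hence $\overline{M}_{0,1}(X,1)$ is irreducible of dimension $n+(n-3)=2n-3$. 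As $d=1$ admits no boundary degenerations, forgetting the marked point exhibits $\overline{M}_{0,0}(X,1)$ as the quotient by this $\mathbb{P}^{1}$-family, so it is irreducible of dimension $2n-4$.
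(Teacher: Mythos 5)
Your overall strategy coincides with the paper's: reduce to the explicit models, and in the cases $H^{n}\le 2$ translate lines on $X$ into bitangent (resp.\ tritangent) lines of the branch divisor via the double cover, so that a general fiber of $\mathrm{ev}$ becomes a positive-dimensional complete intersection in $\mathbb{P}^{n-1}$ (resp.\ in $\mathbb{P}(1^{n},2)$), hence connected, hence irreducible on the free locus. That part of your outline is sound, as is the final deduction of irreducibility of $\overline{M}_{0,0}(X,1)$ from (1) and (2). But part (2) for $H^{n}\le 2$ --- the finiteness of the jump locus $S$ --- is precisely the content of the theorem that is not routine, and your proposal replaces it with the sentence that you ``would carry out a separate infinitesimal analysis''; no such analysis is supplied, so the technical heart of the proof is missing. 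In the paper this step occupies most of Section 3: for $H^{n}=2$ one shows, using the irreducibility of hyperplane sections of smooth hypersurfaces (Lemma \ref{sectionirr}), that a point of $f^{-1}(B)$ with fiber dimension $n-2$ forces the tangent hyperplane section of $B$ to be a cone with vertex at $f(p)$, and such cone points are finite; for $H^{n}=1$ one needs the divisors $D_{p}$ swept by lines through $p$, the singularity statement of Claim \ref{divsing} (which itself rests on the $a$-invariant bound of Lemma \ref{largea}), the involution $\iota$, and the isolated-singularity Lemmas \ref{isolated1} and \ref{isolated3}.

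A second, related gap: you locate the only possible obstruction at points lying over the branch divisor, but nothing in your setup excludes a positive-dimensional family of jump points \emph{away} from the branch locus (where $F_{3}$ and $F_{2}^{2}-4F_{4}$ could fail to be a regular sequence). The paper must rule this out separately, via Claim \ref{maxdiv}: a curve $C$ of jump points produces a divisor $D^{C}$ contained in $D_{p}$ for every $p\in C$, and one then derives a contradiction from the reducedness of $B\cap T_{f(p)}B$ (degree $2$ case) or from the classification of $D^{C}$ as a member of $|H|$ or $|3H|$ with only isolated singularities (degree $1$ case). Without an argument covering these points your set $S$ is not known to be finite, only closed. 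Finally, note a small circularity you should make explicit if you pursue this route for $H^{n}=1$: the connectedness of the general fiber there uses that the fiber is cut out by three equations \emph{and} already has the expected dimension $n-3$, i.e.\ part (1) in that case depends on part (2).
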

\begin{rem}
For some dimension range, one can show (1) easily by an analysis of $a$-covers.
Indeed, the generic fiber of $\mathrm{ev}$ is smooth by \cite[II.3.11 Theorem]{Kollar1996}.
By \cite[Proposition 5.15]{Lehmann2019}, the finite part of the Stein factorization of $\mathrm{ev}$ is an $a$-cover.
However, by \cite[Theorem 11.1]{Lehmann2019a}, there is no generically finite morphism with $a(Y,-f^{*}K_{X}) \ge a(X, -K_{X})$ if $X$ satisfies one of the following:
\begin{itemize}
\item $H^{n} = 2,3$, $\dim X \ge 4$, and $X$ is general in its moduli, 
\item $H^{n} = 4$ and $\dim X \ge 5$, or
\item $H^{n} = 5$ and $\dim X = 6$, i.e., $X \cong \mathrm{Gr}(2, 5)$.
\end{itemize}
Hence, in these cases, $\mathrm{ev}$ has connected fibers.
\end{rem}
Before proving Theorem \ref{line}, we introduce several lemmas.
\begin{lem}\label{spanned}
Let $X$ be a del Pezzo manifold of Picard rank $1$ and dimension $n \ge 4$.
Let $W \subset \overline{M}_{0,0}(X, 1)$ be an $m$-dimensional subvariety parametrizing lines passing through a fixed point.
Then the members of $W$ cover a subvariety of $X$ of dimension $m+1$.
\end{lem}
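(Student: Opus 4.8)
The plan is to exhibit the swept-out locus as the image of the universal family over $W$ and to reduce the statement to the generic finiteness of the evaluation morphism. Let $\pi\colon \mathcal{U}\to W$ be the restriction to $W$ of the universal family over $\overline{M}_{0,0}(X,1)$, let $e\colon \mathcal{U}\to X$ be the evaluation morphism, and set $Z:=e(\mathcal{U})$, the subvariety of $X$ covered by the members of $W$. Every member of $\overline{M}_{0,0}(X,1)$ is an irreducible smooth rational curve, since a degree-$1$ stable map with no marked points can have no contracted components; hence $\pi$ has one-dimensional fibers and $\dim\mathcal{U}=m+1$. As $\mathcal{U}$ is proper over $W$, the image $Z$ is closed and irreducible with $\dim Z\le m+1$, so it suffices to prove that $e$ is generically finite onto $Z$, that is, that $\dim Z\ge m+1$.

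First I would suppose for contradiction that $\dim Z\le m$, and let $p$ be the common point of the lines of $W$. Since $Z$ contains a line, $\dim Z\ge 1$, so a general point $q\in Z$ is distinct from $p$, and by the theorem on the dimension of fibers $\dim e^{-1}(q)\ge (m+1)-\dim Z\ge 1$. The unique point of any line lying over $q$ is $q$ itself, so $e^{-1}(q)$ is isomorphic to $W_{q}:=\{[\ell]\in W\mid q\in\ell\}$. Thus $W_{q}$ would be a positive-dimensional family of lines, every member of which passes through the two distinct fixed points $p$ and $q$.

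The crux is to rule this out. Passing to the closure of $W_{q}$ in the proper space $\overline{M}_{0,0}(X,1)$ and choosing a complete curve through it, I obtain a one-parameter family of rational curves all passing through $p$ and $q$ whose members genuinely vary, because distinct points of $W_{q}$ are distinct lines. By Mori's bend-and-break lemma with two fixed points, some member of this family must degenerate into a connected, reducible or non-reduced rational $1$-cycle through $p$ and $q$; consequently the class of a line would split as a sum of at least two nonzero effective curve classes. This is impossible, since $H$ is ample and $H\cdot\ell=1$ force any such decomposition to have $H$-degree at least $2$. Hence $\dim Z=m+1$, as claimed. The main obstacle is exactly this last step: one must ensure that the degeneration is forced through both fixed points and that the minimality of the line class (equivalently, that it spans an extremal ray of $\overline{\mathrm{Eff}}_{1}(X)$) prohibits the resulting splitting. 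When $H^{n}\ge 2$ one can instead argue directly in projective space: pushing the family forward by the finite morphism attached to $|H|$—the embedding into projective space when $H^{n}\ge 3$, the double cover of $\mathbb{P}^{n}$ when $H^{n}=2$—sends the lines to lines through the image of $p$, and an $m$-dimensional family of lines through a fixed point sweeps out a cone of dimension $m+1$.
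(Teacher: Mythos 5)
Your argument is correct and is essentially the paper's own proof, which simply states that by the bend-and-break lemma there is no one-parameter family of lines through two fixed points; you have filled in the dimension count via the universal family and the reason the degeneration is impossible (a connected effective $1$-cycle of $H$-degree $1$ cannot be reducible or non-reduced since $H$ is ample). The alternative via the morphism attached to $|H|$ in your last paragraph is a valid shortcut for $H^{n}\ge 2$ but is not needed.
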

\begin{proof}
Let $Z \subset X$ be the subvariety which is covered by lines parametrized by $W$.
Suppose that $Z$ has dimension less than $m+1$.
Let $W' \subset \overline{M}_{0,1}(X,1)$ be the $(m+1)$-dimensional subvariety corresponding to $W$.
Consider the evaluation map $\mathrm{ev}\colon \overline{M}_{0,1}(X, 1) \rightarrow X$.
Then any fiber of the restriction morphism $\mathrm{ev}|_{W'}$ has dimension at least $1$.
In particular, this means that there is a one parameter family $T$ of lines passing through two fixed points.
By bend and break lemma (e.g., \cite[II.5.5 Corollary]{Kollar1996}), $T$ contains a point which corresponds to a non-integral $1$-cycle.
However, it is a contradiction since $T$ parametrizes lines on $X$.
\end{proof}
\begin{lem}\label{unexpected}
Let $X$ be a del Pezzo manifold of Picard rank $1$ and dimension $n \ge 4$ and let 
$\mathrm{ev} \colon \overline{M}_{0,1}(X, 1) \rightarrow X$ be the evaluation map.
Then $\dim \mathrm{ev}^{-1}(p) \le n - 2$ for any point $p \in X$.
\end{lem}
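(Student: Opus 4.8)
The plan is to reduce the statement to the purely geometric assertion that the lines through a fixed point $p$ never sweep out all of $X$, and then to exclude the extremal case by a deformation count. Write $W_{p} \subset \overline{M}_{0,0}(X,1)$ for the locus of lines through $p$. Since there are no reducible degree-one stable maps carrying a single marking, $\overline{M}_{0,1}(X,1)$ is the universal line over $\overline{M}_{0,0}(X,1)$, and an embedded line through $p$ carries a unique point over $p$; hence the forgetful map identifies $\mathrm{ev}^{-1}(p)$ with $W_{p}$, so $\dim \mathrm{ev}^{-1}(p) = \dim W_{p}$ (if $W_{p}$ is finite the bound is trivial, so I may assume $\dim W_{p} \ge 1$). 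By Lemma \ref{spanned}, applied to a component of maximal dimension, the lines parametrized by $W_{p}$ cover a subvariety $Z_{p} \subset X$ with $\dim Z_{p} = \dim W_{p} + 1$. Thus the desired bound $\dim \mathrm{ev}^{-1}(p) \le n-2$ is equivalent to the single statement $Z_{p} \neq X$.

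Next I would argue by contradiction, supposing $Z_{p} = X$. Then among the finitely many components of $W_{p}$ there is one, say $V$, with $\dim V = n-1$ whose lines dominate $X$. Because this is a family of rational curves dominating $X$, its general member $\ell$ passes through a general point of $X$ and is therefore free (see \cite{Kollar1996}); note that freeness is forced by meeting a general point and is unaffected by the fixed incidence with $p$. Concretely, $f^{*}T_{X} \cong \mathcal{O}(2) \oplus \bigoplus_{i=1}^{n-1}\mathcal{O}(a_{i})$ with all $a_{i} \ge 0$ and $\sum_{i} a_{i} = -K_{X}\cdot \ell - 2 = n-3$.

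I would then compute the local dimension of $W_{p}$ at such a free $\ell$. Parametrizing pointed lines by $\Mor(\mathbb{P}^{1}, X;\, q_{0} \mapsto p)$, the tangent space is $H^{0}(f^{*}T_{X}(-q_{0})) = H^{0}(\mathcal{O}(1)) \oplus \bigoplus_{i} H^{0}(\mathcal{O}(a_{i}-1))$, which has dimension $2 + \sum_{a_{i}\ge 1} a_{i} = 2 + (n-3) = n-1$ since all $a_{i}\ge 0$. Freeness gives $H^{1}(f^{*}T_{X}(-q_{0})) = 0$, so $\Mor(\mathbb{P}^{1}, X;\, q_{0}\mapsto p)$ is smooth of dimension $n-1$ at $[\ell]$. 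Quotienting by the free action of the $2$-dimensional group $\mathrm{Aut}(\mathbb{P}^{1}, q_{0})$ — free because the embedded line has trivial automorphism stabilizer — shows that $W_{p}$ is smooth of dimension $n-3$ at $[\ell]$. This contradicts $[\ell] \in V$ with $\dim V = n-1 > n-3$. Hence $Z_{p} \neq X$, and the lemma follows.

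The technical heart is the freeness step together with the bookkeeping of the automorphism quotient: one must check that a general member of a dominating family, even when pinned at $p$, is still free, and that the two directions removed in passing from $\Mor$ to $W_{p}$ are exactly the reparametrizations fixing $q_{0}$ — which they are, because the differential of the embedded line realizes $T_{\mathbb{P}^{1}} = \mathcal{O}(2)$ as a direct summand of $f^{*}T_{X}$. Everything else is a routine dimension count. I would emphasize that the argument is uniform in the degree $H^{n}$, so, unlike the approach sketched in the remark after Theorem \ref{line}, no appeal to the classification or to $X \cap T_{p}X$ is needed here.
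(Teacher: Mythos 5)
Your proof is correct and takes essentially the same route as the paper's (two-line) proof: Lemma \ref{spanned} reduces the bound to showing that the lines through $p$ cannot sweep out all of $X$, and the free/non-free dichotomy rules out the extremal case. Your explicit computation that a free line through a fixed point moves in an $(n-3)$-dimensional family, via $h^{0}(f^{*}T_{X}(-q_{0})) = n-1$ and the $2$-dimensional automorphism quotient, is precisely the content compressed in the paper's remark that the members of a dominating family of that size would have to be simultaneously free (as they pass through a general point) and non-free (as they move in too large a family through $p$).
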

\begin{proof}
Suppose that there is a point $p \in X$ such that $\dim \mathrm{ev}^{-1}(p) \ge n - 1$.
Let $W$ be the image of an $(n-1)$-dimensional component of $\mathrm{ev}^{-1}(p)$ under the morphism $\overline{M}_{0,1}(X,1)\rightarrow \overline{M}_{0,0}(X,1)$.
Then $W$ has also dimension $n-1$.
If $W$ contains a point corresponding to a free line, then $W$ must have the expected dimension $n-3$ by \cite[II.3.5 Corollary]{Kollar1996}.
Thus $W$ parametrizes non-free lines.
On the other hand, by Lemma \ref{spanned}, $X$ is covered by lines parametrized by $W$.
However, it is a contradiction since non-free lines are contained in a closed subset of $X$ by \cite[II.3.11 Theorem]{Kollar1996}.
\end{proof}
\begin{lem}\label{sectionirr}
Let $n \ge 4$ be an integer.
If $X \subset \mathbb{P}^n$ is a smooth hypersurface,
then any hyperplane section of $X$ is reduced and irreducible.
\end{lem}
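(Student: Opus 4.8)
The plan is to prove that every hyperplane section $Y = X \cap H$ is \emph{normal} and \emph{connected}; since a connected normal scheme is irreducible and in particular reduced, this gives both conclusions at once. We may assume $d := \deg X \ge 2$, the case of a hyperplane (where the sections are linear subspaces) being trivial. Writing $X = V(F) \subset \mathbb{P}^n$ and $H \cong \mathbb{P}^{n-1}$, the section $Y = V(F|_{H})$ is a hypersurface in $H$ of degree $d$ and dimension $n-2 \ge 2$; here $F|_{H} \neq 0$, since otherwise $H \subseteq X$ would force $X = H$, contradicting $d \ge 2$.

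The first and essential step is to control the singular locus of $Y$. Because $X$ is smooth, for $p \in Y$ the section $Y$ fails to be smooth at $p$ exactly when the defining linear form of $H$ restricts to zero on the tangent space, i.e. when the projective tangent hyperplane $T_{p}X$ equals $H$. Thus $\operatorname{Sing}(Y) = \{p \in X \mid T_{p}X = H\}$ is precisely the fiber over $[H]$ of the Gauss map $\gamma \colon X \to (\mathbb{P}^{n})^{*}$, $p \mapsto [T_{p}X]$. For a smooth hypersurface of degree $\ge 2$ the Gauss map is finite (e.g. \cite[Corollary 3.4.18]{Lazarsfeld2004}), so this fiber is a finite set, and hence $\operatorname{Sing}(Y)$ has codimension $\ge 2$ in $Y$.

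Granting this, normality is formal. As a hypersurface in $\mathbb{P}^{n-1}$, $Y$ is Cohen--Macaulay, hence satisfies Serre's condition $S_{2}$; and the previous step shows $Y$ is regular in codimension $1$, i.e. $R_{1}$. By Serre's criterion $Y$ is normal, and in particular reduced. For connectedness I would invoke the projective dimension theorem: $Y$ is pure of dimension $n-2$, so if it were disconnected it would split into two nonempty closed subsets, each containing a component of dimension $n-2$, and two such subvarieties of $\mathbb{P}^{n-1}$ must meet because $(n-2)+(n-2) \ge n-1$ for $n \ge 3$ --- contradicting disjointness. Hence $Y$ is connected, and a connected normal scheme is irreducible.

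The only nontrivial input, and therefore the main obstacle, is the control of $\operatorname{Sing}(Y)$ through the finiteness of the Gauss map; the passage to normality via Serre's criterion and the dimension-count proof of connectedness are standard. It is worth noting exactly where each hypothesis enters: smoothness of $X$ together with $d \ge 2$ is what makes the Gauss map finite, while the bound $n \ge 4$ guarantees $\dim Y \ge 2$, so that a finite singular locus indeed has codimension at least two and the $R_{1}$ condition applies.
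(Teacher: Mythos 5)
Your proof is correct, but it takes a genuinely different route from the paper's. The paper argues by contradiction directly on the defining equation: if the section cut by $(x_{n}=0)$ were reducible (or non-reduced), the defining polynomial of $X$ would have the form $x_{n}F + GH$ with $G,H$ non-constant, and since $n \ge 4$ the locus $V(x_{n},F,G,H) \subset \mathbb{P}^{n}$ is nonempty; every point of it is a singular point of $X$, a contradiction. That argument is completely elementary, using only the projective dimension theorem. You instead identify $\operatorname{Sing}(X \cap H)$ with the fiber of the Gauss map over $[H]$ and import its finiteness (a result the paper does invoke elsewhere, in the remark following Theorem 3.1), then conclude via Serre's criterion together with a connectedness count. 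What your approach buys is a stronger, more structural conclusion --- the section is normal with finite singular locus, not merely integral --- at the cost of a nontrivial external input. One simplification worth noting: once you know $\operatorname{Sing}(X \cap H)$ is finite, you can bypass Serre's criterion and the connectedness step entirely, since a nontrivial factorization $F|_{H} = GK$ (allowing $G=K$, which covers non-reducedness) would force the singular locus to contain $V(G) \cap V(K) \subset \mathbb{P}^{n-1}$, a nonempty set of dimension at least $n-3 \ge 1$. Both proofs correctly locate where the hypothesis $n \ge 4$ enters.
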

\begin{proof}
Assume that the section of $X$ cut by the hyperplane $(x_{n} = 0)$ is not irreducible.
Then there is a polynomials $F, G, H$ such that $X$ is given by the equation 
\[x_{n}F + GH = 0.\]
Since $n \ge 4$, the set $S = V(x_{n}, F, G, H)$ is nonempty.
Then, however, $X$ is singular at any point in $S$, which is a contradiction.
\end{proof}
Firstly, we prove Theorem \ref{line} for the case $H^{n} \ge 3$.
In particular, when $H^{n} = 5$, $X$ is a smooth linear section of the Grassmannian $\mathrm{Gr}(2,5)\subset \mathbb{P}^{9}$.
The papers such as \cite{Abdelkerim2012}, \cite{Chung2018}, \cite{Harris2004}, \cite{Piontkowski1999}, \cite{Takagi2012} will be of great help to study Grassmannians.
\begin{proof}[Proof of Theorem \ref{line}: $H^{n}\ge 3$ case]
    When $H^n = 3$, $X\subset \mathbb{P}^{n+1}$ is a smooth cubic hypersurface.
    In this case, the theorem is proved in \cite{Coskun2009}.
    \par
    When $H^n = 4$, $X \subset \mathbb{P}^{n+2}$ is a smooth complete intersection of two quadrics.
    We will show in a similar way to \cite{Coskun2009} that $\mathrm{ev}$ is flat with irreducible general fibers.
    Let $p = (1:0:\dots:0) \in X$ and we may assume that $T_{p}X = V(x_{1}, x_{2})$.
    Then $X$ is given by two polynomials 
    \[
    \left\{
    \begin{aligned}
    x_{0}x_{1} + F(x_{1}, \dots, x_{n+2}) &= 0\\
    x_{0}x_{2} + G(x_{1}, \dots, x_{n+2}) &= 0,
    \end{aligned}
    \right. 
    \]
    where $F,G$ are homogeneous polynomials of degree $2$.
    Any line $\ell$ on $\mathbb{P}^{n+2}$ passing through $p$ is written as 
    \[\ell = \{(s:a_{1}t:\dots:a_{n+2}t) \mid (s:t)\in \mathbb{P}^{1}\} \]
    for $(a_{1}:\dots:a_{n+2}) \in \mathbb{P}^{n+1}$.
    Hence $\ell$ is contained in $X$ if and only if 
    \[
    \left\{
    \begin{aligned}
    a_{1} st + F(a_{1}, \dots, a_{n+2})t^2 &= 0\\
    a_{2} st + G(a_{1}, \dots, a_{n+2})t^2 &= 0
    \end{aligned}
    \right. 
    \]
    for any $(s:t)\in \mathbb{P}^{1}$.
    Thus we obtain that $\mathrm{ev}^{-1}(p) \cong V(x_{1}, x_{2}, F, G) \subset \mathbb{P}^{n+1}$, which is in particular, connected.
    Moreover, if $\dim \mathrm{ev}^{-1}(p) = n-2$, then either of $F(0,0,x_{3},\dots, x_{n+2})$ or $G(0,0,x_{3},\dots, x_{n+2})$ is identically zero, or $F(0,0,x_{3},\dots, x_{n+2})$ and $G(0,0,x_{3},\dots, x_{n+2})$ have a common component.
    In any case, $X\cap V(x_{1}, x_{2})$ has dimension $n-1$ and degree at most $2$.
    However, it contradicts the Lefschetz theorem (e.g., \cite[Example 3.1.31]{Lazarsfeld2004}).
    Thus $\mathrm{ev}$ is flat with connected fibers.
    Finally, by \cite[II.3.11 Theorem]{Kollar1996}, the general fiber of $\mathrm{ev}$ is smooth.
    Therefore, the claim holds when $H^{n}=4$.
    \par
    When $H^n = 5$,  $X$ is a smooth linear section of the Grassmannian $\mathrm{Gr}(2,5)$.
    In particular, $\dim X \le 6$.
    In this case as well, we will show that $\mathrm{ev}$ is flat with irreducible general fibers.
    The smoothness of general fibers of $\mathrm{ev}$ follows by \cite[II.3.11 Theorem]{Kollar1996}.
    Hence it is enough to show that $\mathrm{ev}$ is flat with connected fibers.
    Let $V$ be a $5$-dimensional vector space and let $\mathrm{Gr}(2, 5)$ be the Grassmannian of $2$-dimensional subspaces of $V$.
    There is an embedding, called the Pl\"{u}cker embedding $\mathrm{Gr}(2,5)\rightarrow \mathbb{P}(\bigwedge^{2}V) \cong \mathbb{P}^{9}$, which maps $[\langle v, w\rangle]$ to $[v\wedge w]$.
    By \cite{Abdelkerim2012}, the space of lines on $\mathrm{Gr}(2,5)$ is isomorphic to the flag variety \[F(1,3,5) = \{(V_{1}, V_{3}) \mid V_{1} \subset V_{3} \subset V, \dim V_{i} = i\}, \] 
    that is, the line corresponding to $(V_{1}, V_{3}) \in F(1,3,5)$ is the Schubert variety 
    \[\sigma = \{[W]\in \mathrm{Gr}(2,5) \mid V_{1} \subset W\subset V_{3}\}.\]
    Hence the space of lines passing through $[W]\in \mathrm{Gr}(2,5)$ is isomorphic to 
    \[\{(V_{1}, V_{3})\in F(1,3,5) \mid V_{1}\subset W\subset V_{3}\}, \]
    which is, moreover, isomorphic to $\mathbb{P}^{1}\times \mathbb{P}^{2}$.
    Therefore, the claim holds when $\dim X = 6$. 
    \par
    Fix a basis $\{e_{1}, \dots, e_{5}\}$ of $V$ and let $z_{ij} \, (1\le i<j\le 5)$ be the corresponding coordinates of $\mathbb{P}^{9}$.
    Take a point $[W] = [\langle e_{1}, e_{2}\rangle] \in \mathrm{Gr}(2,5)$.
    There is a bijection $\mathbb{P}^{*}(\bigwedge^{2}V)\rightarrow \{Q\in \mathrm{Mat}(V) \mid {}^{t}Q = -Q\}/k^{*}$, which maps a hyperplane $H = V(\sum_{1\le i < j\le 5}a_{ij}z_{ij})$ to the skew symmetric matrix $Q_{H} := (a_{ij})_{1\le i,j \le 5}$ (i.e., $a_{ji} = -a_{ij}$) modulo scalar multiplications.
    Then $\mathrm{Gr}(2,5)\cap H$ is singular at $[W]$ if and only if $T_{[W]}\mathrm{Gr}(2,5) \subset H$.
    The embedded tangent space $T_{[W]}\mathrm{Gr}(2,5) \subset \mathbb{P}^{9}$ is spanned by $7$ points $[e_{1}\wedge e_{2}], \dots, [e_{1}\wedge e_{5}], [e_{2}\wedge e_{3}], \dots, [e_{2}\wedge e_{5}]$ by \cite{Abdelkerim2012}.
    Hence the condition $T_{[W]}\mathrm{Gr}(2,5) \subset H$ is equivalent that $W \subset \Ker Q_{H}$ (e.g., \cite[Corollary 1.6]{Piontkowski1999}).
    Since $Q_{H}$ is skew symmetric of size $5$, it cannot be regular.
    Thus $\mathrm{Gr}(2,5)\cap H$ is smooth if and only if $\dim \Ker Q_{H} = 1$.
    \par
    Suppose that $\mathrm{Gr}(2,5) \cap H$ is smooth and contains $[W]$.
    For $((x_{1}:x_{2}), (x_{3}:x_{4}:x_{5})) \in \mathbb{P}^{1}\times \mathbb{P}^{2}$, the corresponding line on $\mathrm{Gr}(2,5)$ connecting $2$ points $[e_{1}\wedge e_{2}]$ and $[(x_{1}e_{1}+x_{2}e_{2})\wedge (x_{3}e_{3}+x_{4}e_{4}+x_{5}e_{5})]$ is contained in $\mathrm{Gr}(2,5)\cap H$ if and only if 
    \[{}^{t}(x_{1}e_{1}+x_{2}e_{2})Q_{H}(x_{3}e_{3}+x_{4}e_{4}+x_{5}e_{5}) = 0.\]
    Hence the space of lines on $\mathrm{Gr}(2,5) \cap H$ is isomorphic to an ample divisor 
    \[A_{H} := V\left(\sum_{i=1}^{2} \sum_{j=3}^{5}a_{ij}x_{i}x_{j}\right) \subset \mathbb{P}^{1}\times \mathbb{P}^{2} \] 
    of bidegree $(1,1)$, where $Q_{H} = (a_{ij})_{i,j}$.
    Therefore, the claim holds when $\dim X = 5$.
    Furthermore, $A_{H}$ fails to be irreducible if and only if $\Ker Q_{H} \subset W$.
    \par
    Finally, we consider the intersection $X = \mathrm{Gr}(2,5) \cap H_{1} \cap H_{2}$ of $\mathrm{Gr}(2,5)$ and two hyperplanes $H_{1}, H_{2}$.
    Another interpretation of $X$ is the base locus of the pencil $P \subset |\mathcal{O}_{\mathrm{Gr}(2,5)}(1)|$ spanned by $H_{1}$ and $H_{2}$.
    The smoothness of $X$ is equivalent to the smoothness of $\mathrm{Gr}(2,5) \cap H_{\alpha}$ for any $H_{\alpha} \in P$ (e.g., by the Jacobian criterion).
    We may assume that $[W] = [\langle e_{1}, e_{2}\rangle] \in X$.
    Then the space of lines on $X$ passing through $[W]$ is isomorphic to the intersection of two ample divisors $A_{H_{1}} \cap A_{H_{2}} \subset \mathbb{P}^{1}\times \mathbb{P}^{2}$, or one can write as $\bigcap_{H_{\alpha}\in P}A_{H_{\alpha}}$.
    If $A_{H_{1}} = A_{H_{2}}$, then for some $H_{\alpha}$, we have $\dim \Ker Q_{H_{\alpha}} > 1$, a contradiction.
    Hence the dimension of $A_{H_{1}} \cap A_{H_{2}}$ has to be $1$ unless $\Ker Q_{H_{\alpha}} \subset W$ for any $H_{\alpha} \in P$.
    Consider the morphism $c \colon P\rightarrow \mathbb{P}(V)$ given by $c(H_{\alpha}) = \Ker Q_{H_{\alpha}}$.
    Then by \cite[Proposition 6.3]{Piontkowski1999}, $c$ is an embedding to a smooth conic.
    This implies that there exists $H_{\alpha} \in P$ such that $\Ker Q_{H_{\alpha}} \not\subset W$.
    Thus $A_{H_{1}} \cap A_{H_{2}}$ has dimension $1$, which concludes the claim when $\dim X = 4$.
\end{proof}
\subsection{Proof of Theorem \ref{line}: $H^{n} = 2$ case}
Let $X$ be a del Pezzo manifold of degree $2$.
Then $X$ is a smooth hypersurface of degree $4$ in the weighted projective space $\mathbb{P}(1^{n+1},2)$. 
Hence $X$ has the form
\[X = V(y^{2} + F), \]
where $F \in k[x_{0}, \dots, x_{n}]$ is a homogeneous polynomial of degree $4$ and $\deg y = 2$.
Then there is a double cover $f \colon X \rightarrow \mathbb{P}^{n}$ branched over a smooth quartic $B = V(F) \subset \mathbb{P}^{n}$.
Then by the projection formula, a curve $\tilde{\ell} \subset X$ is a line if and only if $f_{*} \tilde{\ell} \subset \mathbb{P}^n$ is a line.
This correspondence induces a natural transformation 
\[\overline{\mathcal{M}}_{0,0}(X,1) \rightarrow \mathit{Grass}(2, n+1), \]
of functors (notation as in \cite[\S.1]{Fulton1997} and \cite[I.1]{Kollar1996}).
Hence it defines the morphism into a Grassmannian of lines in $\mathbb{P}^{n}$
\[f_{*} \colon M := \overline{M}_{0, 0} (X, 1) \rightarrow \mathbb{G}(1, n) \cong \overline{M}_{0, 0}(\mathbb{P}^{n}, 1),\]
which is finite onto its image, say $N$.
Let $\ell \subset \mathbb{P}^n$ be a line and $\tilde{\ell}$ be a component of $f^{-1}(\ell)$.
Applying the Hurwitz formula to the restriction morphism $f|_{\tilde{\ell}}$, we obtain that
\[2g -2 = -2 \deg(f|_{\tilde{\ell}}) + \deg(R),\]
where $g$ is the genus of $\tilde{\ell}$ and $R$ is the ramification divisor.
If $\deg(f|_{\tilde{\ell}}) = 2$, then $H\cdot \tilde{\ell} = \mathcal{O}_{\mathbb{P}^{n}}(1)\cdot f_{*}\tilde{\ell} = 2$.
Thus $\tilde{\ell}$ is a line on $X$ if and only if $g = 0$, $\deg(f|_{\tilde{\ell}}) = 1$, and $\deg(R) = 0$.
Thus the image $N$ of $f_{*}$ is the set of lines bitangent to $B$, or contained in $B$.
Note that there is a commutative diagram
\[
  \xymatrix{
    M' \ar[r]^{\mathrm{ev}} \ar[d]_{f_{*}} & X \ar[d]^{f} \\
    N' \ar[r]^{\mathrm{ev}_{N}} & \mathbb{P}^{n},
  }
\]
where $M' = \overline{M}_{0, 1}(X, 1)$ and $N'$ be the image of $f_{*} \colon M' \rightarrow \overline{M}_{0, 1}(\mathbb{P}^{n}, 1)$.
\begin{proof}[Proof of Theorem \ref{line}.(1): $H^{n} = 2$ case]
By \cite[II.3.11 Theorem]{Kollar1996}, there is an open subset $U \subset X$ such that any line on $X$ intersecting $U$ is free.
Thus, the fiber $\mathrm{ev}^{-1}(p)$ is smooth for any point $p \in U$.
\par
Since a smooth and connected scheme is irreducible, it suffices to show that the general fiber is connected.
Let $p \in X$ be a point such that $f(p) \notin B$. 
Then we have $\mathrm{ev}^{-1}(p) \cong \mathrm{ev}_{N}^{-1}(f(p))$.
We may assume that $f(p) = (1: 0: \dots : 0) \in \mathbb{P}^{n}$.
We will calculate the fiber $\mathrm{ev}_{N}^{-1}(f(p))$.
The polynomial $F$ defining the quartic $B$ has the form 
\[F(x_{0}, \dots , x_{n}) = x_{0}^{4} + x_{0}^{3}F_{1} + x_{0}^{2}F_{2} + x_{0}F_{3} + F_{4},\]
where $F_{i} \in k[x_{1}, \dots ,x_{n}]$ are homogeneous of degree $i$. 
Furthermore, by taking the linear transform $x_{0} \mapsto x_{0} - \frac{1}{4}F_{1}$, we may assume that $F_{1} = 0$.
We take a line $\ell = \{(s:a_{1}t:\dots:a_{n}t) \mid (s:t) \in \mathbb{P}^{1}\}$ for $a = (a_{1}:\dots:a_{n}) \in \mathbb{P}^{n-1}$.
Then $\ell$ is an element of $N$ if and only if every solution of the equation 
\[s^{4} + F_{2}(a)s^{2}t^{2} + F_{3}(a)st^{3} + F_{4}(a)t^{4} = 0\]
has an even multiplicity, that is, there exist $\alpha , \beta, \gamma \in k$ such that 
\[s^{4} + F_{2}(a)s^{2}t^{2} + F_{3}(a)st^{3} + F_{4}(a)t^{4} = (\alpha s^{2} + \beta st + \gamma t^{2})^{2}.\]
Comparing the coefficients, we obtain the equations
\begin{align*}
1 &= \alpha^{2}, \\
0 &= 2\alpha\beta, \\
F_{2} &= \beta^{2} + 2\alpha\gamma, \\
F_{3} &= 2\beta\gamma, \\
F_{4} &= \gamma^{2}.
\end{align*}
Thus, we have 
\[F_{3}(a) = F_{2}^{2}(a) - 4F_{4}(a) = 0.\]
Thus we obtain that $\mathrm{ev}^{-1}(p) \cong \mathrm{ev}_{N}^{-1}(f(p)) \cong V(F_{3}, F_{2}^{2} - 4F_{4}) \subset \mathbb{P}^{n - 1}$, which is connected.
\end{proof}
\begin{proof}[Proof of Theorem \ref{line}.(2): $H^{n} = 2$ case]
First, we give a characterization of a point $p \in f^{-1}(B)$ such that the fiber dimension is $n-2$.
In fact, we will show that for a point $p\in f^{-1}(B)$, $\dim \mathrm{ev}^{-1} = n-2$ if and only if $B\cap T_{f(p)}B$ is a cone with the vertex $f(p)$.
We may assume that $f(p) = (1:0:\dots:0) \in B$ and the tangent hyperplane to $B$ at $f(p)$ is given by $(x_{n} = 0)$.
Then $B$ is defined by a homogeneous polynomial
\[F(x_0, \dots , x_n) = x_{0}^{3}x_{n} + x_{0}^{2}F_{2} + x_{0}F_{3} + F_{4}. \]
where $F_{i} \in k[x_1,\dots,x_n]$ is a homogeneous polynomial of degree $i$.
We take a line $\ell = \{ (s:a_{1}t:\dots:a_{n}t) \mid (s:t) \in \mathbb{P}^{1}\}$ for $a = (a_{1}:\dots:a_{n}) \in \mathbb{P}^{n-1}$.
Then $\ell$ is an element of $N$ if and only if every solution of the equation
\[a_{n}s^{3}t + F_{2}(a)s^{2}t^2 + F_{3}(a)st^3 + F_{4}(a)t^4 = 0\]
has an even multiplicity, and this is equivalent to 
\[a_{n} = F_{3}(a)^2 - 4F_{2}(a)F_{4}(a) = 0.\]
Therefore the space of lines passing through $f(p)$ is isomorphic to the closed subset $V(x_{n}, F_{3}^{2} - 4F_{2}F_{4}) \subset \mathbb{P}^{n-1}$ of dimension at most $n - 2$.
Set $R_{i} = F_{i}(x_{1},\dots,x_{n-1},0)$ and $R = F(x_{0},\dots,x_{n-1},0)$. 
Hence $R_{i}$ is a homogeneous polynomial of degree $i$ with variables $x_{1},\dots,x_{n-1}$ and 
\[R = x_{0}^{2}R_{2} + x_{0}R_{3} + R_{4}.\]
We consider the relation \[R_{3}^{2} = 4R_{2}R_{4}\] as polynomials of $k[x_{1}. \dots , x_{n-1}]$.
We first assume that $R_{3} \neq 0$.
If $R_2$ divides $R_3$, then there is a homogeneous polynomial $P$ of degree $1$ such that $R_{3} = 2PR_{2}$ and $4P^{2}R_{2}^{2} = 4R_{2} R_{4}$, hence we have $R_{4} = P^{2}R_{2}$.
Substituting them, we obtain that
\begin{align*}
R &= x_{0}^{2}R_{2} + 2x_{0}PR_{2} + P^{2}R_{2}\\
  &= (x_{0} + P)^{2}R_{2}.
\end{align*}
On the other hand, if $R_2$ does not divide $R_3$, then there is a homogeneous polynomials $P$, $Q$ with $\deg P = 1$, $\deg Q = 2$ such that $R_2 = P^2$, $R_3 = 2PQ$, and $R_4 = Q^2$.
Indeed, $R_{2}$ is not an irreducible polynomial since $R_{2}$ does not divide $R_{3}$.
Write $R_{2} = P\hat{P}$, where $P$ and $\hat{P}$ are homogeneous polynomials of degree $1$.
We have $R_{3}^{2} = 4P\hat{P}R_{4}$.
If $P \ne \hat{P}$, then $R_{2}$ divides $R_{4}$, hence also $R_{3}$, a contradiction.
Thus $R_{2} = P^{2}$ and one can also see the existence of $Q$.
Thus, we obtain that 
\begin{align*}
R &= x_{0}^{2}P^{2} + 2x_{0}PQ + Q^{2}\\
  &= (x_{0}P + Q)^2.
\end{align*}
Therefore, in both cases, $R$ is not an irreducible polynomial, which is a contradiction by Lemma \ref{sectionirr}, hence $R_{3} = 0$.
Hence we obtain that $R_{2} = 0$ or $R_{4} = 0$.
If $R_{4} = 0$, then we have
\[R = x_{0}^{2}R_{2},\]
which is a contradiction by Lemma \ref{sectionirr} again.
Therefore $R_{2} = R_{3} = 0$ and $R = R_{4}$.
This implies that the tangent hyperplane section of $B$ at $f(p)$ is a cone with vertex at $f(p)$.
Thus there are only finitely many such points by a similar argument of the proof of  \cite[Corollary 2.2]{Coskun2009}.
\par
It remains to show that there are only finitely many points outside $f^{-1}(B)$ where the fiber dimension may jump.
Assume that there is a proper irreducible curve $C$ on $X$ where the fiber dimension is $n-2$.
Then $f(C) \not\subset B$ by the above argument. 
Let $Z$ be the reduced subscheme of $X$ covered by lines intersecting with $C$. 
The subscheme $Z$ is the union of all subschemes $Z_{p}$ which are spanned by lines through $p\in C$.
Since each $Z_{p}$ has dimension $n-1$ by Lemma \ref{spanned}, hence $\dim Z \ge n-1$.
If $\dim Z = n$, then the family of lines intersecting $C$ covers $X$.
However, those lines are not free, which is a contradiction.
Hence we have $\dim Z = n-1$.
Now, for a point $p \in C$, we let $D_{p}$ be the reduced divisor on $X$ which is covered by lines through $p$.
Then we claim that: 
\begin{claim}\label{maxdiv}
There is a nonempty divisor $D$ on $X$ such that $D \subset D_{p}$ for any point $p \in C$.
We set $D^{C}$ to be the maximal divisor satisfying this property.
\end{claim}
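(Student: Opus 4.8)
The plan is to reduce the claim to a covering argument on the irreducible curve $C$. First I would decompose $Z$ into its finitely many irreducible components and isolate those of top dimension $n-1$, say $W_{1}, \dots , W_{m}$. For each $p \in C$ the set $D_{p}$ is contained in $Z$, since every line through $p$ meets $C$ at $p$; moreover, because the fiber dimension of $\mathrm{ev}$ along $C$ is $n-2$, Lemma \ref{spanned} applied to the $(n-2)$-dimensional family of lines through $p$ gives $\dim D_{p} = n-1$. Hence $D_{p}$ has an irreducible component of dimension $n-1$, which, being contained in $Z$, must be one of the $W_{i}$. Setting $C_{i} = \{ p \in C \mid W_{i} \subseteq D_{p} \}$, this shows that every $p \in C$ lies in some $C_{i}$, so $\bigcup_{i} C_{i} = C$.

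The heart of the argument is to prove that each $C_{i}$ is closed in $C$. For this I would form the incidence variety $\Sigma = \{ (p,q) \in C \times X \mid p \text{ and } q \text{ lie on a common line of } X \}$ and exhibit it as the image of a proper morphism: pulling back the universal line $M' \to M$ along $\mathrm{ev}^{-1}(C) \to M$ and re-evaluating produces a proper family over $C$ whose image in $C \times X$ is $\Sigma$, so $\Sigma$ is closed and the fiber of $\Sigma \to C$ over each $p \in C$ is exactly $D_{p}$. Intersecting, $\Sigma_{i} := \Sigma \cap (C \times W_{i})$ is closed with proper projection to $C$ (as $X$ is proper), and its fiber over $p$ is $D_{p} \cap W_{i}$. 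Since $W_{i}$ is irreducible of dimension $n-1$, this intersection has dimension $n-1$ if and only if $W_{i} \subseteq D_{p}$, so $C_{i} = \{ p \in C \mid \dim (\Sigma_{i})_{p} \ge n-1 \}$ is closed by upper semicontinuity of fiber dimension for the proper morphism $\Sigma_{i} \to C$.

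To conclude, $C$ is irreducible and is covered by the finitely many closed subsets $C_{1}, \dots , C_{m}$; if each were proper, $C$ would be a finite union of proper closed subsets and hence finite, contradicting $\dim C = 1$. Thus $C_{i} = C$ for some $i$, and $D := W_{i}$ is a nonempty divisor contained in every $D_{p}$. Taking $D^{C}$ to be the union of all $W_{i}$ with $C_{i} = C$ then gives the maximal divisor with this property.

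I expect the main obstacle to be the closedness step: one must check that $\Sigma$ is genuinely closed, which relies on the properness of the Kontsevich space and its universal curve, and that the fiber of $\Sigma \to C$ over \emph{every} point $p \in C$ — not merely a general one — coincides with all of $D_{p}$, so that the semicontinuity argument detects precisely the condition $W_{i} \subseteq D_{p}$.
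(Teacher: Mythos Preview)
Your argument is correct and rests on the same key input as the paper's: the closedness of the incidence locus $\Sigma \subset C \times X$ of pairs lying on a common line, obtained as the image of a proper evaluation from $\overline{M}_{0,2}(X,1)$ (equivalently, from the universal line over $\mathrm{ev}^{-1}(C)$). The organization differs slightly. The paper first observes---by the same pigeonhole on the finitely many top-dimensional components of $Z$---that some component $D$ satisfies $D \subset D_{p}$ for $p$ in a dense open $U \subset C$; it then notes that $\Gamma := \Sigma \cap (C \times D)$ is closed and contains $U \times D$, hence equals $C \times D$, which finishes. You instead run the closedness argument for every component $W_{i}$ simultaneously and conclude by covering the irreducible curve $C$ with the closed sets $C_{i}$. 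These are dual formulations of the same idea; the paper's version is marginally shorter because it needs only that a closed set containing a dense subset is everything, whereas you invoke upper semicontinuity of fiber dimension. One cosmetic point: in your final step, ``and hence finite, contradicting $\dim C = 1$'' is more than necessary---an irreducible space is never a finite union of proper closed subsets, and that alone yields $C_{i} = C$ for some $i$.
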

\begin{proof}
First, it follows that there is a divisor $D$ on $X$ such that $D \subset D_{p}$ for any point $p$ in an open subset $U \subset C$ since $D\subset Z$ and $\dim Z = n-1$.
We will show that $U=C$.
We define a subset $\Gamma \subset C \times D$ by 
\[\Gamma = \{(p,q)\in C \times D \mid \mbox{there exists a line $\ell$ through $p, q$}\}.\]
Consider the following commutative diagram
\[
  \xymatrix{
  	 & \overline{M}_{0,2}(X, 1)\ar[dl]_-{s_{1}}\ar[d]^-{s}\ar[dr]^-{s_{2}} & \\
    X & X \times X\ar[l]^-{p_{1}}\ar[r]_-{p_{2}} & X,
  }
\]
where $s$ is the evaluation map, $p_{i}$ is the $i$-th projection.
Then we have $\Gamma = s(s_{1}^{-1}(C) \cap s_{2}^{-1}(D))$.
Hence it is closed in $C \times D$.
On the other hand, $\Gamma$ contains $U \times D$.
Thus $\Gamma = C \times D$, i.e., $D \subset D_{p}$ for any $p\in C$. 
\end{proof}
Since $C \not\subset f^{-1}(B)$, we take two points $p, q \in C$ such that $f(p) \in B$ and $f(q) \notin B$.
Any line on $\mathbb{P}^{n}$ tangent to $B$ at $f(p)$ is contained in $T_{f(p)}B \cong \mathbb{P}^{n-1}$.
Since $\dim \mathrm{ev}^{-1}(p) = n-2$, we have $f(D_{p}) = T_{f(p)}B$, which is irreducible. 
Since $f(D^{C}) \subset f(D_{p})$ are $(n-1)$-dimensional, we have $f(D^{C}) = T_{f(p)}B$. 
Now consider the projection morphism $\pi \colon T_{f(p)}B\setminus \{f(q)\} \rightarrow \mathbb{P}^{n-2}$ from $f(q)$.
The restriction morphism to $B \cap T_{f(p)}B$ is finite since $f(q) \notin B$.
Let $\ell$ be a line such that $f(q)\in \ell \subset T_{f(p)}B$.
Then $\ell$ is bitangent to $B$ if and only if the fiber of $\pi|_{B \cap T_{f(p)}B}$ over the point $\pi(\ell)$ is non-reduced.
Hence, in order to obtain $f(D_{q}) = T_{f(p)}B$, any fiber of this restriction has to be non-reduced.
Thus $B \cap T_{f(p)}B$ also has to be non-reduced.
However, by Lemma \ref{sectionirr}, $B \cap T_{f(p)}B$ is reduced, a contradiction.
Hence we have $f(D_{q}) \neq f(D_{p}) = f(D^{C})$ and thus there are only finitely many points where the fiber does not have the expected dimension.
\end{proof}
\subsection{Proof of Theorem \ref{line}: $H^{n} = 1$ case}
Let $X$ be a del Pezzo manifold of degree $1$.
Then $X$ is a smooth hypersurface of degree $6$ in the weighted projective space $\mathbb{P}(1^n,2,3)$. 
Using a homogeneous polynomial $F = y^3 + F_{4}y + F_{6}$ of degree 6, where $F_{i} \in k[x_{1},\dots , x_{n-1}]$ is a homogeneous polynomial of degree $i$, and $\deg y = 2$, 
$X$ is written as 
\[X = V(z^{2} + F), \]
where $\deg z = 3$.
Then there is a double cover $f\colon X \rightarrow \mathbb{P} := \mathbb{P}(1^n,2)$ branched over a smooth sextic $Y = V(F)$ and the vertex $v := (0:\dots :0:1)\in \mathbb{P}$.
Let $Z := V(z)$ be the ramification divisor of $f$.
Set $w := (0:\dots:0:1:1)\in X$, which is the unique point mapping to the vertex $v \in \mathbb{P}$.
For later use, we introduce some notations.
Consider the projection $g \colon \mathbb{P}\setminus \{v\} \rightarrow \mathbb{P}^{n-1}$ from the vertex $v$.
The restriction $g|_{Y}$ is finite of degree $3$.
Let $R \subset Y$ be the ramification locus of $g|_{Y}$ and $B \subset \mathbb{P}^{n-1}$ be the branch locus.
More explicitly, these are written as 
\begin{align*}
R &= V(3y^{2} + F_{4}, 2y^{3} - F_{6})\subset Y, \\
B &= V(4F_{4}^{3} + 27F_{6}^{2})\subset \mathbb{P}^{n-1}. 
\end{align*}
In addition, we set $T := V(F_{4},F_{6})$, over which $g|_{Y}$ is totally ramified.
Therefore, there is a following diagram:
\[
  \xymatrix{
    X\ar[d]_-{f}\ar@{}[r]|*{\supset} & Z\ar[d]^-{\cong} & &\\
    \mathbb{P}\ar@{.>}[dr]_-{g}\ar@{}[r]|*{\supset} &Y\ar[d]^-{g|_{Y}}\ar@{}[r]|*{\supset} & R\ar[d] &\\
     & \mathbb{P}^{n-1}\ar@{}[r]|*{\supset} & B\ar@{}[r]|*{\supset} & T.
  }
\]
Let $\tilde{\ell}$ be a line on $X$. 
Then $\mathcal{O}_{\mathbb{P}}(1) \cdot f_{*}\tilde{\ell} = \mathcal{O}_{X}(1)\cdot \tilde{\ell} = 1$ by the projection formula. 
Consider the restriction morphism $f|_{\tilde{\ell}}$.
We will call a connected $1$-cycle $\ell \subset \mathbb{P}$ to be a line if $\mathcal{O}_{\mathbb{P}}(1)\cdot \ell = 1$.
\begin{enumerate}[(1)]
\item If $\deg f|_{\tilde{\ell}} = 1$, then $\alpha := f_{*}\tilde{\ell}$ is a smooth line not passing through the vertex $v$.
By the Hurwitz formula, it is shown that $\alpha$ is tritangent to $Y$ or $\alpha \subset Y$.
Moreover, if $\alpha \not\subset Y$, then the pull back $f^{*}(\alpha)$ is a union of two distinct lines on $X$.
Since $v\notin \alpha$, we have $\mathcal{O}_{\mathbb{P}^{n-1}}(1)\cdot g_{*}\alpha = \mathcal{O}_{\mathbb{P}}(1) \cdot \alpha = 1$ by the projection formula.
Hence $\deg g|_{\alpha} = 1$ and $g_{*}\alpha$ is a line on $\mathbb{P}^{n-1}$.
\item If $\deg f|_{\tilde{\ell}} = 2$, then $\alpha = 2\beta$ is a double line, where $\beta$ is the curve class satisfying $\mathcal{O}_{\mathbb{P}}(1) \cdot \beta = \frac{1}{2}$ and $v \in \beta$. 
Moreover, $\beta$ is tangent to $Y$.
\end{enumerate}
Let $N_{1}$ (resp. $N_{2}$) be the locus of smooth lines of type (1) (resp. double lines of type (2)), and set $N:= N_{1} \cup N_{2}$.
Since the inverse image of a line in $N_{2}$ is a line on $X$ passing through $w$, the dimension of $N_{2}$ is at most $n-2$ by Lemma \ref{unexpected}.
Hence $N_{2}$ cannot form a component of $N$ since $\dim N = \dim M_{0,0}(X,1) \ge 2n-4$.
Moreover, a general element of $N$ has type (1) and not contained in $Y$.
Hence the pushforward of $f$ induces the double cover $f_{*} \colon M:= \overline{M}_{0, 0}(X, 1) \rightarrow N$ and the commutative diagram
\[
  \xymatrix{
    M'\ar[r]^-{\mathrm{ev}}\ar[d]_-{f_{*}} & X\ar[d]^-{f} \\
    N'\ar[r]^-{\mathrm{ev}_{N}} & \mathbb{P},
  }
\]
where $M' = \overline{M}_{0, 1}(X, 1)$ and $N'$ be the image of $f_{*} \colon M' \rightarrow \overline{M}_{0, 1}(\mathbb{P}, 1)$.
We introduce two lemmas for the proof of Theorem \ref{line}.(2):
\begin{lem}\label{isolated1}
Notation as above.
Then any fundamental divisor $D \in |H|$ on $X$ has only isolated singularities.
\end{lem}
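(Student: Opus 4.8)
The plan is to prove that $\mathrm{Sing}(D)$ contains no curve, by reducing the problem to the assertion that the embedded tangent hyperplane to $X$ (equivalently, to the branch sextic $Y$) would have to be constant along such a curve, and then excluding this through the finiteness of the Gauss map.

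First I would make the fundamental divisors explicit. Since $H^{0}(X,\mathcal{O}_{X}(1))$ is spanned by the weight-one coordinates $x_{1},\dots,x_{n}$, every $D\in|H|$ has the form $D=X\cap V(\ell)$ for a linear form $\ell$, and after a coordinate change I may take $\ell=x_{n}$, so that $D=V(z^{2}+\bar F)\subset\mathbb{P}(1^{n-1},2,3)$ with $\bar F=F|_{x_{n}=0}$. A direct check shows $D$ misses the two singular points $(0:\cdots:0:1:0)$ and $(0:\cdots:0:0:1)$ of the ambient weighted projective space (neither lies on $z^{2}+\bar F=0$), so $\mathrm{Sing}(D)$ is cut out on the affine cone by the Jacobian of $z^{2}+\bar F$. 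Reading off the partial derivatives gives $z=0$, $3y^{2}+\bar F_{4}=0$ and $y\,\nabla_{x}\bar F_{4}+\nabla_{x}\bar F_{6}=0$, while the Euler relation makes the membership equation $y^{3}+\bar F_{4}y+\bar F_{6}=0$ automatic. Through the ramification locus $z=0$ this identifies $\mathrm{Sing}(D)$ with $\mathrm{Sing}(\bar Y)$, where $\bar Y=Y\cap V(\ell)$ is the corresponding section of the smooth sextic $Y$.

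The mechanism I would exploit is a dimension count. For any $p\in X$ (every point of $X$ is smooth) the section $D=X\cap V(\ell)$ is singular at $p$ precisely when the embedded projective tangent space satisfies $\mathbb{T}_{p}X\subseteq V(\ell)$; since $\dim\mathbb{T}_{p}X=\dim X=n=\dim V(\ell)$, the inclusion forces $\mathbb{T}_{p}X=V(\ell)$. Equivalently, on the cone the tangent hyperplane $T_{\hat p}\hat X$ equals the fixed hyperplane $\{v_{x_{n}}=0\}$. Hence if $\mathrm{Sing}(D)$ contained a curve $\Gamma$, the tangent hyperplane to $X$ would be constant along $\Gamma$, and likewise the tangent hyperplane to $Y$ would be constant along the curve $\mathrm{Sing}(\bar Y)\subset R$. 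This is exactly the configuration forbidden by the finiteness of the Gauss map already invoked in the remark after Theorem \ref{line} (via \cite[Corollary 3.4.18]{Lazarsfeld2004}): a Gauss map is constant on no positive-dimensional subvariety.

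The step I expect to be the main obstacle is deploying Gauss-map finiteness legitimately, because $X$ and $Y$ live in weighted projective space, so the cited statement for ordinary $\mathbb{P}^{N}$ does not apply verbatim; this is precisely why the range $H^{n}\le2$, where $H$ fails to be very ample, requires a separate treatment. To get around this I would transport the situation to ordinary $\mathbb{P}^{n-1}$ through the projection $g$. On $R$ the assignment $p\mapsto[\nabla_{x}F(p)]$ is a morphism to $(\mathbb{P}^{n-1})^{*}$ (its being everywhere defined is exactly the smoothness of $X$), its fibre over $[\ell]$ is $\mathrm{Sing}(\bar Y)$, and along $R$ one computes $\nabla(4F_{4}^{3}+27F_{6}^{2})=108\,y^{3}\,\nabla_{x}F$. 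Thus on the open locus $y\neq0$ this morphism factors through the Gauss map of the branch hypersurface $B=V(4F_{4}^{3}+27F_{6}^{2})\subset\mathbb{P}^{n-1}$, and a positive-dimensional fibre would exhibit that Gauss map as constant along a curve; ruling this out is the heart of the matter, and there I would use that the branch of the finite cover $g|_{Y}$ of the smooth $Y$ is not tangentially degenerate, in the spirit of \cite[Corollary 3.4.18]{Lazarsfeld2004}. The remaining locus $y=0$ lies over $T=V(F_{4},F_{6})$, where $[\nabla_{x}F]=[\nabla_{x}F_{6}]$; here smoothness of $X$ forces $\nabla_{x}F_{6}\neq0$, and I would exclude a positive-dimensional fibre by the same tangency argument applied to $V(F_{6})$. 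Matching these two strata and checking that the singularities of $B$ along $T$ contribute no extra one-dimensional component is the delicate part of the proof.
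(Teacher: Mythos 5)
Your reduction is sound up to a point: writing $D=X\cap V(\ell)$, computing the Jacobian, and identifying $\mathrm{Sing}(D)$ with the locus in $Z\cong Y$ where every partial of $F$ except the one dual to $\ell$ vanishes is exactly right (and, via the Euler relation, this locus automatically lies in $V(\ell)$). But the proof then rests entirely on a finiteness statement that you never establish, and the tool you propose cannot deliver it. \cite[Corollary 3.4.18]{Lazarsfeld2004} (Zak's theorem) is a statement about the Gauss map of a \emph{smooth} subvariety of ordinary projective space. The branch divisor $B=V(4F_{4}^{3}+27F_{6}^{2})\subset\mathbb{P}^{n-1}$ is a discriminant hypersurface and is singular (along $T=V(F_4,F_6)$ at the very least), so its Gauss map is only defined on the smooth locus and may well have positive-dimensional fibres there --- think of a cone. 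Your own text concedes the point twice: ``ruling this out is the heart of the matter'' and ``checking that the singularities of $B$ along $T$ contribute no extra one-dimensional component is the delicate part of the proof.'' Those are precisely the places where the lemma actually lives; the finiteness of the map $r\mapsto[\nabla_{x}F(r)]$ on $R$ is \emph{equivalent} to the lemma, so deferring it to an unproved tangency principle for a singular hypersurface in the style of Zak leaves the proof with a genuine hole rather than a routine verification.

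The paper's argument avoids Gauss maps entirely and is much shorter: from the same Jacobian one reads off that $\mathrm{Sing}(D)\subseteq Z$, and that at any point of $Z$ where \emph{additionally} the remaining partial $\partial F/\partial x_{0}$ vanishes, all partials of $F$ vanish, contradicting the smoothness of $Y$; hence $D$ is smooth along $V(\partial F/\partial x_{0})\cap Z$. Since $V(\partial F/\partial x_{0})$ is an effective ample divisor on the ambient weighted projective space, any positive-dimensional component of $\mathrm{Sing}(D)$ would have to meet it, which is impossible. If you want to keep your tangency picture, the fix is to replace the appeal to Zak by this ampleness step: a curve in $\mathrm{Sing}(D)$ forces one extra partial to vanish somewhere along it, and smoothness of $Y$ then kills the point. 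As written, though, your proposal does not constitute a proof.
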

\begin{proof}
One can prove this in a similar way to \cite[Lemma 6.6]{Lehmann2018}.
We give a proof for completeness.
\par
Since $\mathcal{O}_{X}(H) \cong \mathcal{O}_{\mathbb{P}(1^{n},2,3)}(1)|_{X}$, we may assume that $D$ is cut out by a hyperplane $V(x_{0})$.
For each integer $0\le i\le n-1$, we set $U_{i} := (x_{i} \neq 0) \subset \mathbb{P}(1^{n},2,3)$, which is an open affine subset isomorphic to 
\[\mathbb{A}^{n+1} \cong \Spec \left(k\left[\frac{x_{0}}{x_{i}}, \dots, \frac{x_{n-1}}{x_{i}}, \frac{y}{x_{i}^{2}}, \frac{z}{x_{i}^{3}}\right]\right).\]
Set $\tilde{x}_{j} := x_{j}/x_{i}$ for $j\neq i$, $\tilde{y} := y/x_{i}^{2}$, and $\tilde{z} := z/x_{i}^{3}$.
Then we have $D \subset \bigcup_{i=1}^{n-1}U_{i} \cup \{w\}$.
For simplicity, we assume we are in $U_{n-1}$.
Then the Jacobian matrix for $D \cap U_{n-1}$ is 
\[
  \begin{pmatrix}
    1 & 0 & \cdots & 0 & 0 & 0\\
    \frac{\partial \tilde{F}}{\partial \tilde{x}_{0}} & \frac{\partial \tilde{F}}{\partial \tilde{x}_{1}} & \cdots & \frac{\partial \tilde{F}}{\partial \tilde{x}_{n-2}} & \frac{\partial \tilde{F}}{\partial \tilde{y}} & 2\tilde{z}
  \end{pmatrix}, 
\]
where $\tilde{F} = F/x_{n-1}^{6} \in k[\tilde{x}_{0}, \dots, \tilde{x}_{n-2}, \tilde{y}, \tilde{z}]$.
Then the singular locus $\mathrm{Sing}(D\cap U_{n-1})$ is contained in $V(\tilde{z})$.
On the other hand, $D\cap U_{n-1}$ is smooth along $V(\frac{\partial \tilde{F}}{\partial \tilde{x_{0}}}, \tilde{z})$ since $Z$ is smooth.
Running the same argument for any $i \in \{1,\dots, n-2\}$, we see that $\mathrm{Sing}(D) \subset Z \cup \{w\}$ and $D$ is smooth along $V(\frac{\partial F}{\partial x_{0}})\cap Z$. 
However, $\mathrm{Sing}(D)$ meets with $V(\frac{\partial F}{\partial x_{0}})\cap Z$ unless the locus is $0$-dimensional.
Thus the claim holds.
\end{proof}
\begin{lem}\label{isolated3}
Notation as above.
Let $D\in |3H|$ be a member of the form $D = V(z + P) \cap X$, where $P \in k[x_{0},\dots,x_{n-1},y]$ is a homogeneous polynomial of degree $3$.
Then $D$ has only isolated singularities.
\end{lem}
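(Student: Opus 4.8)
The plan is to realize $D$ as a divisor on the smooth variety $X = V(z^2+F)$ and to force its singular locus off the ramification divisor $Z = V(z)\cap X$, which is ample.

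I would compute the singular locus from the Jacobian of the pair $(h, G)$ with $h = z+P$ and $G = z^2+F$, exactly as in the proof of Lemma \ref{isolated1}. Since $X = V(G)$ is smooth, the row of partial derivatives of $G$ never vanishes on $D$, so $D$ is singular at $p$ precisely when the two rows are proportional there, say $\nabla G = \lambda\, \nabla h$. The key observation is the comparison of the entries in the $z$-column: $\partial_z h = 1$ while $\partial_z G = 2z$, so proportionality gives $2z(p) = \lambda$. Were $z(p) = 0$, this would make $\nabla G$ vanish at $p$, contradicting the smoothness of $X$; hence $z(p)\neq 0$. Thus every singular point of $D$ lies in $\{z\neq 0\}$ and $\mathrm{Sing}(D)\cap Z = \emptyset$.

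To finish, note that $z$ is the weight-$3$ coordinate, so $Z \in |3H|$ is an effective ample divisor on the projective variety $X$. A closed subvariety of positive dimension must meet the support of any ample effective divisor (its intersection with a curve it contains is strictly positive), so a closed subset disjoint from $Z$ is necessarily finite. Applying this to $\mathrm{Sing}(D)$ gives the claim.

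I expect the $z$-column comparison to be the only real content, and it is exactly what separates this statement from Lemma \ref{isolated1}: there the cutting section $x_0$ has vanishing $z$-entry, so the same computation places the singular locus \emph{inside} $Z$ and one must instead use the smoothness of the sextic $Y$. Here the term $z$ in $h$ pushes the singular locus off $Z$, after which ampleness closes the argument at once. As a check, one can run the computation on the isomorphic model $\bar D = V(P^2+F)\subset\mathbb{P}\setminus\{v\}$ coming from $z = -P$ on $D$: a singular point with $P = 0$ would force $\nabla F = 0$ on $V(F)$, contradicting smoothness of $Y$, and $V(P)|_{\bar D}$ is again ample.
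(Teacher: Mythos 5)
Your proof is correct and follows essentially the same route as the paper: compute the $2\times(n+2)$ Jacobian of the pair $(z+P,\,z^{2}+F)$, use the $z$-column (entries $1$ and $2z$) to show the singular locus of $D$ is disjoint from $Z$, and conclude by ampleness of $Z$ that this locus is finite. The only cosmetic difference is that you rule out singular points on $Z$ via smoothness of $X$ (equivalently of $Y$), whereas the paper cites smoothness of $Z$; these amount to the same Jacobian computation.
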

\begin{proof}
As in the proof of Lemma \ref{isolated1}, we use the notation $U_{i}, \tilde{x}_{i}, \tilde{y}, \tilde{z}$, and $\tilde{F}$.
Since the degree of $P$ is odd, we see that $w \notin D$. 
Hence we have $D \subset \bigcup_{i=0}^{n-1}U_{i}$.
For simplicity, we assume we are in $U_{n-1}$.
Then the Jacobian matrix for $D\cap U_{n-1}$ is 
\[
  \begin{pmatrix}
    \frac{\partial \tilde{P}}{\partial \tilde{x}_{0}} & \cdots & \frac{\partial \tilde{P}}{\partial \tilde{x}_{n-2}} & \frac{\partial \tilde{P}}{\partial \tilde{y}} & 1\\
    \frac{\partial \tilde{F}}{\partial \tilde{x}_{0}} & \cdots & \frac{\partial \tilde{F}}{\partial \tilde{x}_{n-2}} & \frac{\partial \tilde{F}}{\partial \tilde{y}} & 2\tilde{z}
  \end{pmatrix}, 
\]
where $\tilde{P} = P/x_{n-1}^{3} \in k[\tilde{x}_{0}, \dots, \tilde{x}_{n-2}, \tilde{y}, \tilde{z}]$.
Then $D \cap U_{n-1}$ is smooth along $V(\tilde{z})$ since $Z$ is smooth.
Running the same argument for any $i \in \{0,\dots, n-2\}$, we see that $D$ is smooth along $Z$.
However, the singular locus of $D$ meets with $Z$ unless the locus is $0$-dimensional.
Thus the claim holds.
\end{proof}
\begin{proof}[Proof of Theorem \ref{line}.(1): $H^{n} = 1$ case]
As in the case of $H^{n}=2$, the general fiber of $\mathrm{ev}$ is smooth by \cite[II.3.11 Theorem]{Kollar1996}.
Hence it suffices to show the connectedness of the general fiber.
Let $p \in X$ map to a point $f(p) \notin Y \cup g^{-1}(B)$. 
Then we have $\mathrm{ev}^{-1}(p) \cong \mathrm{ev}_{N}^{-1}(f(p))$ and any member of $N_{2}$ cannot pass through $f(p)$ since $f(p) \notin g^{-1}(B)$. 
We may assume that $f(p) = (1: 0:\dots :0:b_{0}) \in \mathbb{P}$.
Let $\ell$ be a line on $\mathbb{P}$ through $f(p)$ but not through $v$.
We recall that a line on $\mathbb{P}$ is a connected $1$-cycle $\alpha$ such that $\mathcal{O}_{\mathbb{P}}(1)\cdot \alpha = 1$.
Hence by the projection formula, we see that $\deg g|_{\ell} = 1$ and $g(\ell)$ is a line on $\mathbb{P}^{n-1}$. 
Since $gf(p) = (1:0:\dots :0) \in g(\ell)$, one can write 
\[g(\ell) = \{(s:a_{1}t:\dots:a_{n-1}t) \mid (s:t)\in \mathbb{P}^{1}\}\]
for $(a_{1}:\dots :a_{n-1})\in \mathbb{P}^{n-2}$.
Thus $\ell$ has the form 
\[\ell = \{(s:a_{1}t:\dots :a_{n-1}t:b_{0}s^2 + b_{1}st + b_{2}t^2) \mid (s:t)\in \mathbb{P}^{1}\}\]
for $(a_{1}:\dots :a_{n-1}:b_{1}:b_{2}) \in \mathbb{P}(1^n, 2)$.
We write the polynomial $F(\ell)$ in the variables $s,t$, substituting the parameter of $\ell$ in $F$, as 
\[F(\ell) = \sum_{i=0}^{6} G_{i}s^{6-i}t^{i}.\]
Since $\ell$ contains $f(p)$, $\ell$ cannot be an element of $N_{2}$.
Then $\ell$ is an element of $N$ if and only if every solution of the equation 
\[F(\ell) = 0\]
has an even multiplicity, that is, there exist $\alpha, \beta, \gamma, \delta \in k$ such that 
\[F(\ell) = (\alpha s^{3} + \beta s^{2}t + \gamma st^{2} + \delta t^{3})^{2}.\]
Comparing the coefficients, we obtain the equations
\begin{align*}
G_{0} &= \alpha^{2}, \\
G_{1} &= 2\alpha\beta, \\
G_{2} &= \beta^{2} + 2\alpha\gamma, \\
G_{3} &= 2\alpha\delta + 2\beta\gamma, \\
G_{4} &= \gamma^{2} + 2\beta\delta, \\
G_{5} &= 2\gamma\delta, \\
G_{6} &= \delta^{2}.
\end{align*}
By assumption, $G_{0} = F(f(p)) \ne 0$, hence we may assume that $\alpha = 1$.
Then we obtain 
\begin{align*}
\beta &= \frac{1}{2}G_{1}\\
\gamma &= \frac{1}{2}G_{2} - \frac{1}{8}G_{1}^{2}\\
\delta &= \frac{1}{2}G_{3} - \frac{1}{4}G_{1}G_{2} + \frac{1}{16}G_{1}^{3}.
\end{align*}
Substituting them into $G_{4}$, $G_{5}, G_{6}$, we obtain three equations with variables $a_{0}, \dots, a_{n-1}, b_{1}, b_{2}$.
Thus $\mathrm{ev}_{N}^{-1}(f(p))$ is defined by three equations in $\mathbb{P}(1^{n}, 2)$. 
Since $\dim \mathrm{ev}^{-1}(p) = n-3$ except for finitely many points by Theorem \ref{line}.(2) proved later, the general fiber is a complete intersection, hence connected. 
\end{proof}
\begin{proof}[Proof of Theorem \ref{line}.(2): $H^{n} = 1$ case]
For each point $p \in X$ such that $\dim \mathrm{ev}^{-1}(p) = n-2$, 
we let $D_{p}$ be the reduced divisor on $X$ which is covered by lines through $p$.
Then we claim that: 
\begin{claim}\label{divsing}
For any irreducible component $D \subset D_{p}$, $D$ is singular at $p$.
\end{claim}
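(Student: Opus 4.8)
The plan is to argue by contradiction: assuming that some irreducible component $D \subseteq D_p$ is smooth at $p$, I would produce an $(n-2)$-dimensional family of lines through $p$ contained in $D$ whose tangent directions sweep out an entire hyperplane of $\mathbb{P}(T_pX) \cong \mathbb{P}^{n-1}$, and then show that this is incompatible with the tritangency conditions defining $N$. First I would isolate the family $W_D \subseteq \mathrm{ev}^{-1}(p)$ of lines through $p$ that lie on $D$. Since these lines cover the divisor $D$ of dimension $n-1$, Lemma \ref{spanned} forces $\dim W_D = n-2$, and together with Lemma \ref{unexpected} this shows that $W_D$ is a union of full-dimensional components of $\mathrm{ev}^{-1}(p)$.

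Next I would introduce the tangent-direction map $\tau \colon W_D \dashrightarrow \mathbb{P}(T_pX)$ sending a line (smooth at $p$) to its tangent line at $p$. If $D$ is smooth at $p$, every member of $W_D$ is tangent to $D$ there, so $\tau$ factors through the hyperplane $\mathbb{P}(T_pD) \cong \mathbb{P}^{n-2}$. I would then check that $\tau$ is generically finite: a positive-dimensional subfamily of lines through $p$ sharing a single tangent direction at $p$ would, by the bend-and-break principle already invoked in Lemma \ref{spanned}, degenerate to a nonreduced or reducible $1$-cycle, contradicting that $W_D$ parametrizes honest lines. Combined with $\dim W_D = n-2$, this makes $\tau$ dominant onto $\mathbb{P}(T_pD)$; in other words, a general tangent direction in $T_pD$ is realized by a genuine line of $X$ through $p$.

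Finally I would transport the conclusion through the double cover $f \colon X \to \mathbb{P}$. Assuming first that $f(p) \neq v$ and $p \notin Z$, so that $f$ is \'etale near $p$, the images $f_{*}W_D$ are lines through $f(p)$ whose tangent directions fill a hyperplane; feeding a general such direction into the coefficients $G_4, G_5, G_6$ computed in the proof of Theorem \ref{line}.(1) and exploiting that they must vanish identically along this hyperplane, I expect to deduce that the corresponding planar section of the sextic $Y$ is a cone with vertex $f(p)$, hence singular there, so that $f(D)$ — and therefore $D$ — is singular at $p$, contradicting smoothness. The ramified case $p \in Z$ needs a separate analysis of $f$ along $Z$, where the singularity of $D$ at $p$ arises from the branching rather than from a cone point downstairs. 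I expect the main obstacle to be precisely this last step: turning ``the tangent directions fill a hyperplane'' into ``the relevant section of $Y$ degenerates to a cone'' via the explicit tritangency equations, and organizing the argument so that the case $p \in Z$ is treated uniformly with the \'etale case, using Lemma \ref{sectionirr} to exclude the reducible or nonreduced configurations that would otherwise interfere.
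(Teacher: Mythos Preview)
Your approach differs substantially from the paper's. The paper gives a short, coordinate-free proof via the $a$-invariant: if $D$ were smooth at $p$, pass to a resolution $\phi\colon\tilde{D}\to D$ and let $\alpha$ be the class of the strict transforms of the lines. The $(n-2)$-dimensional family of lines through $p$ lifts to a family through the single preimage $\tilde{p}$ and still covers $\tilde{D}$, so its general member is free on $\tilde{D}$; smoothness of the moduli space at a free point then forces the local dimension of $s^{-1}(\tilde{p})$ to equal $-K_{\tilde{D}}\cdot\alpha - 2$, hence $-K_{\tilde{D}}\cdot\alpha = n$. Since $\phi^{*}H\cdot\alpha = 1$ and $\alpha$ is movable, this witnesses $a(D,H)\ge n > n-1 = a(X,H)$, contradicting Lemma~\ref{largea}. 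No equations, no case split on whether $p\in Z$.

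Your outline has a real gap at the generic-finiteness step. Lemma~\ref{spanned} rests on the two-point bend-and-break, which rules out positive-dimensional families of lines through \emph{two} fixed points. A family of lines through $p$ sharing a tangent direction $v$ is, after blowing up $p$, a family through the single point $[v]\in E$; one computes $-K_{\tilde{X}}\cdot\tilde{C} = (n-1)-(n-1)=0$, so neither the two-point nor the positive-anticanonical-degree version of bend-and-break applies. Generic finiteness of $\tau$ may still hold, but it needs an argument specific to the geometry (for instance, observing that all such lines lie in the surface $(g\circ f)^{-1}(\text{a fixed line})$, which is a del Pezzo surface of degree $1$, and invoking finiteness of $(-1)$-curves there), not the lemma you cite. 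Beyond this, the step you flag as the main obstacle really is one: for $H^{n}=1$ the fiber $\mathrm{ev}_N^{-1}(f(p))$ is cut by three equations in a weighted $\mathbb{P}(1^{n},2)$ rather than two equations in $\mathbb{P}^{n-1}$, and translating ``$\tau(W_D)$ fills a hyperplane of $\mathbb{P}(T_pX)$'' into constraints on $(a_i,b_1,b_2)$ is not straightforward, particularly across the cases $p\in Z$, $p\notin Z$, $f(p)\in Y$, $f(p)\notin Y$. The paper's route via Lemma~\ref{largea} bypasses all of this.
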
 
\begin{proof}
Suppose that $p$ is a smooth point of $D$.
Let $\phi \colon \tilde{D} \rightarrow D$ be a smooth resolution.
Let $s \colon \overline{M}_{0,1}(\tilde{D}, \alpha)\rightarrow \tilde{D}$ be the evaluation map, where $\alpha$ is the class of the strict transforms of lines.
Since $D$ is smooth at $p$, the fiber $\phi^{-1}(p)$ consists of a single point $\tilde{p}$, and $\dim s^{-1}(\tilde{p}) = \dim \mathrm{ev}^{-1}(p) = n-2$.
Moreover, since curves parametrized by $s^{-1}(\tilde{p})$ dominate $\tilde{D}$, there exists a free curve $[\bar{\ell}] \in s^{-1}(\tilde{p})$ by \cite[II.3.11 Theorem]{Kollar1996}.
Thus 
\[\dim_{[\bar{\ell}]} s^{-1}(\tilde{p}) = \dim_{[\bar{\ell}]} \overline{M}_{0,1}(\tilde{D}, \alpha) - \dim \tilde{D}\]
by \cite[II.3.5.4 Corollary]{Kollar1996}.
On the other hand, \cite[II.1.2 Theorem]{Kollar1996} yields that 
\[\dim_{[\bar{\ell}]} \overline{M}_{0,0}(\tilde{D}, \alpha) = -K_{\tilde{D}}\cdot \alpha + \dim \tilde{D} - 3. \]
Hence we obtain that $-K_{\tilde{D}}\cdot \alpha = n$.
Then $(K_{\tilde{D}}+n\phi^{*}H)\cdot \alpha = 0$.
Thus $K_{\tilde{D}}+n\phi^{*}H \in \partial \overline{\mathrm{Eff}}^{1}(X)$ and hence $a(D, H) = n > a(X, H)$. 
However, this is impossible by Lemma \ref{largea}.
\end{proof}
Suppose that there is a proper irreducible curve $C$ on $X$ such that for any $p\in C$, $\dim \mathrm{ev}^{-1}(p) = n-2$.
Let $D^{C}$ be the divisor as defined in Claim \ref{maxdiv}.
By Claim \ref{divsing}, we see that any component $D \subset D^{C}$ is singular along the curve $C$.
In the remaining part of the proof, we show that the divisor $D^{C}$ cannot exist dividing into two cases: (i) $f(C) \subset g^{-1}(B) \cup Y$, or (ii) $f(C) \not\subset g^{-1}(B) \cup Y$.
\begin{enumerate}[(i)]
\item 
Suppose that $f(C) \subset g^{-1}(B) \cup Y$.
If $f(C) \subset Y$, then $f(C)$ intersects with $R$.
Also when $f(C) \subset g^{-1}(B)$, one can show that $f(C)$ intersects with $R$.
Indeed, if $f(C)$ is contracted by $g$, then the assertion is clear since $f(C) \subset g^{-1}(B)$.
Suppose $gf(C) \subset B$ is a curve.
Then it suffices to show that $gf(C)$ intersects with $T$.
Since $V(F_{6})$ is an ample divisor on $\mathbb{P}^{n-1}$, one can take a point $a \in gf(C) \cap V(F_{6})$.
Then we have $F_{4}(a) = 0$ since $gf(C) \subset B$.
Hence $a \in gf(C) \cap T$.
\par
For a point $p \in C$ such that $f(p) \in R$, we will analyze the divisor $D_{p}$.
Consider the morphism $\gamma \colon R \rightarrow (\mathbb{P}^{n-1})^{*}$ defined by 
\[\gamma (r) = \left(\frac{\partial F}{\partial x_{0}}(r): \dots :\frac{\partial F}{\partial x_{n-1}}(r)\right),\]
which is well-defined since $Y$ is smooth and $\frac{\partial F}{\partial y}(r) = 0$ for any $r \in R$.
We view $\gamma(r)$ also as the corresponding hyperplane in $\mathbb{P}^{n-1}$.
Then for any line $\ell \subset \mathbb{P}$ tangent to $Y$ at $f(p)$, 
the image $g(\ell)$ is contained in the hyperplane $\gamma(f(p)) \cong \mathbb{P}^{n-2}$. 
Since the dimension of $D_{p}$ is $n-1$, it follows from Lemma \ref{spanned} that $f(D_{p})$ corresponds to $g^{-1}(\gamma(f(p)))$ in $\mathbb{P}$. 
Therefore, $D_{p}$ is a fundamental divisor on $X$.
Since $D_{p}$ is irreducible, we see that $D^{C} = D_{p}$.
Then $D_{p}$ is singular along the curve $C$ by Claim \ref{divsing}.
However, it contradicts Lemma \ref{isolated1}.
\item
Suppose that $f(C) \not\subset g^{-1}(B) \cup Y$.
Note that for any $p\in X$ such that $f(p) \notin g^{-1}(B)$, $D_{p}$ cannot contain the point $w$ since a line $\ell \in N$ contains $v$ if and only if $\ell \in N_{2}$.
Therefore, $D^{C}$ cannot contain any divisor which is the pullback of a divisor on $\mathbb{P}^{n-1}$.
Let $D \subset D^{C}$ be an irreducible component.
Then $gf(D) = \mathbb{P}^{n-1}$.
Let $\iota \colon X \rightarrow X$ be the involution defined by 
\[(x_{0}:\dots:x_{n-1}:y:z) \mapsto (x_{0}:\dots:x_{n-1}:y:-z).\]
Note that $f = f \circ \iota$ and $\iota|_{Z} = \mathrm{id}_{Z}$.
Since $w\notin D$, we see that $\deg f|_{\tilde{\ell}} = 1$ for any line $\tilde{\ell} \subset D$.
Hence if $q \in C\setminus Z$, then $\iota(q) \notin D$.
Therefore $D \neq \iota(D)$.
Now we fix a point $p \in C \cap Z$ and let $M_{D}$ be the irreducible component of $\mathrm{ev}^{-1}(p)$ which parametrizes lines dominating $D$.
Then we obtain that 
\[M_{D} \cap M_{\iota(D)} \subset \{\mbox{lines contained in $Z$}\}\]
since $D$ and $\iota(D)$ are different components of $D_{p}$.
Furthermore, we claim that:
\begin{claim}\label{inv}
$D \cap \iota(D) \subset Z$.
\end{claim}
\begin{proof}
Pick a point $q \in D\setminus Z$.
It suffices to show that $q \notin \iota(D)$.
Since $p \in C$ and $q \in D$, one can take a line $\tilde{\ell} \subset D$ through both $p$ and $q$.
The image $\ell := f(\tilde{\ell})$ is a member of $N$ such that $\{f(p), f(q)\} \subset \ell \subset f(D)$.
Since $v \notin f(D)$, $\ell$ is a member of $N_{1}$.
We now assume that there exists a line $\ell' \in N_{1}\setminus \{\ell\}$ such that $f(p) \in \ell' \subset f(D)$.
If $g(\ell) \neq g(\ell')$, then $\ell'$ cannot contain $f(q)$ since $g(\ell) \cap g(\ell') = \{gf(p)\} \neq \{gf(q)\}$.
Suppose that $g(\ell) = g(\ell')$.
If we view $\ell$ and $\ell'$ as the curves in $\mathbb{P}(1^{2},2) \cong g^{-1}(g(\ell))$, then $\ell, \ell' \in |\mathcal{O}_{\mathbb{P}(1^{2},2)}(2)|$ and hence we have the intersection number $\ell \cdot \ell' = \frac{1}{2}\cdot 2^{2} = 2$ on the surface $\mathbb{P}(1^{2},2)$.
On the other hand, both $\ell$ and $\ell'$ are tangent to $Y$ at $f(p)$ since $\ell, \ell' \in N_{1}$.
Hence $\ell$ and $\ell'$ intersect at $f(p)$ with multiplicity $2$.
Thus $\ell'$ cannot contain $f(q)$.
This implies that $\ell$ is the unique line such that $\{f(p), f(q)\} \subset \ell \subset f(D)$.
The inverse image of $\ell$ is the union of two lines $\tilde{\ell}, \iota(\tilde{\ell})$.
Since $q \in \tilde{\ell} \setminus Z$, we see that $q \notin \iota(\tilde{\ell})$.
We also see that $\iota(\tilde{\ell}) \notin M_{D}$ since 
\[\iota(\tilde{\ell}) \in M_{\iota(D)} \setminus \{\mbox{lines contained in $Z$}\}. \]
Therefore, $\iota(D)$ cannot contain $q$, hence the claim holds.
\end{proof}
\par
We retake a point $p \in C\setminus Z$.
Let $\tilde{\ell}$ be a line on $X$ such that $p \in \tilde{\ell} \subset D$.
Then we claim that:
\begin{claim}\label{3H}
$D \cdot \iota(\tilde{\ell}) = 3$.
\end{claim}
\begin{proof}
By Claim \ref{inv}, we have 
\[\tilde{\ell} \cap \iota(\tilde{\ell}) \subset D \cap \iota(\tilde{\ell}) \subset D \cap \iota(D) \subset Z.\]
Set $\ell := f(\tilde{\ell}) = f(\iota(\tilde{\ell}))$.
Since $f(D \cap \iota(\tilde{\ell})) \subset \ell$, we have 
\[D \cap \iota(\tilde{\ell}) \subset f^{-1}(\ell) \cap Z = \tilde{\ell} \cap \iota(\tilde{\ell}).\]
Thus we obtain a set-theoretic equality $\tilde{\ell} \cap \iota(\tilde{\ell}) = D \cap \iota(\tilde{\ell})$.
Via an isomorphism $\mathbb{P}^{1} \cong \ell$, we obtain $\mathbb{P}(1^{2},3) \cong \bar{f}^{-1}(\ell)$, where $\bar{f} \colon \mathbb{P}(1^{n},2,3)\dashrightarrow \mathbb{P}(1^{n},2)$ is the projection map.
If we view $\tilde{\ell}$ and $\iota(\tilde{\ell})$ as the curves in $\mathbb{P}(1^{2}, 3)$, then $\tilde{\ell}, \iota(\tilde{\ell}) \in |\mathcal{O}_{\mathbb{P}(1^{2}, 3)}(3)|$ and hence we have the intersection number $\tilde{\ell} \cdot \iota(\tilde{\ell}) = \frac{1}{3}\cdot 3^{2} = 3$ on the surface $\mathbb{P}(1^{2}, 3)$.
Thus, in order to prove that $D\cdot \iota(\tilde{\ell}) = 3$, it is enough to show that the intersection multiplicity at each point $q_{0} \in D \cap \iota(\tilde{\ell})$ is equal to that of $q_{0} \in \tilde{\ell} \cap \iota(\tilde{\ell})$.
\par
Suppose that there is a line $\ell' \in N_{1}\setminus \{\ell\}$ such that $f(q_{0}) \in \ell' \subset f(D)$.
We assume that $g(\ell) = g(\ell')$. 
As in the proof of Claim \ref{inv}, we see that $f(p) \notin \ell'$. 
We recall that $f(D)$ is covered by lines in $N_{1}$ passing through $f(p)$.
We also recall that any line $\alpha \in N_{1}$ satisfies $\deg g|_{\alpha} = 1$.
Hence we see that $g^{-1}(gf(p)) \cap f(D) = \{f(p)\}$.
This implies that the unique point in $g^{-1}(gf(p)) \cap \ell'$ is not contained in $f(D)$, which contradicts the assumption $\ell' \subset f(D)$.
Therefore, $g(\ell) \neq g(\ell')$.
Then $\ell$ and $\ell'$ intersect at $f(q_{0})$ with different tangent directions.
Summarizing the argument, for any line $\tilde{\ell}'$ on $X$ distinct from $\tilde{\ell}$ such that $q_{0} \in \tilde{\ell}' \subset D$, the intersection multiplicity at $q_{0} \in \tilde{\ell}' \cap \iota(\tilde{\ell})$ is $1$.
Therefore, the intersection multiplicity at $q_{0} \in D \cap \iota(\tilde{\ell})$ is equal to that of $q_{0} \in \tilde{\ell} \cap \iota(\tilde{\ell})$, as required.
\end{proof}
By Claim \ref{3H}, we have $D \in |3H|$.
Write 
\[D = V(cz + P) \cap X, \]
where $c \in k$ and $P \in k[x_{0}, \dots, x_{n-1}, y]$ is a homogeneous polynomial of degree $3$.
Since $D \neq \iota(D)$, we may assume that $c = 1$.
Therefore, by Lemma \ref{isolated3}, $D$ has only isolated singularities, which contradicts Claim \ref{divsing}.
\end{enumerate}
\end{proof}
\begin{rem}
Applying the above argument, one can give a very short proof for Theorem \ref{line}.(2) when $H^{n} = 2$:
if $C$ is a proper irreducible curve where the fiber has dimension $n-2$, then for any point $p \in C \cap f^{-1}(B)$, the divisor $D_{p}$ is a fundamental divisor.
Hence the rest is to prove that any fundamental divisor has only isolated singularities. 
\par
The reason why we gave the first proof is because one can see that for a point $p \in f^{-1}(B)$, $\dim \mathrm{ev}^{-1}(p) = n-2$ if and only if $f(p)$ is a cone point of $B$.
This result is similar to the characterization for smooth cubics $X = X_{3} \subset \mathbb{P}^{n+1}$:
for a point $p \in X$, $\dim \mathrm{ev}^{-1}(p) = n-2$ if and only if $p$ is an Eckardt point (\cite[Lemma 2.1 and Definition 2.3]{Coskun2009}).
Unfortunately, the author was not able to give a similar characterization for $H^{n} = 1$.
\end{rem}
\section{Rational curves of arbitrary degree}
In this section, we prove the movable bend and break and Theorem \ref{main} for arbitrary $d \ge 1$.
First, we can apply the argument in \cite[Proposition 2.5]{Coskun2009}, hence we obtain:
\begin{prop}\label{induction}
Let $X$ be a del Pezzo manifold of Picard rank $1$ and dimension $n \ge 4$ with an ample generator $H$. 
Let $\mathrm{ev}_d \colon \overline{M}_{0, 1}(X, d) \rightarrow X$ be the evaluation map for each integer $d \ge 1$.
Let $S \subset X$ be the finite set as in Theorem \ref{line}.
Then for any $d \ge 1$,
\begin{itemize}
\item if $p \notin S$, then $\dim (\mathrm{ev}^{-1}_{d}(p)) = (n - 1)d - 2$, and
\item if $p \in S$, then $\dim (\mathrm{ev}^{-1}_{d}(p)) \le (n - 1)d - 1$.
\end{itemize}
Furthermore, any component $M$ of $\overline{M}_{0,0}(X, d)$ generically parametrizes free curves and has the expected dimension $(n - 1)d + n - 3$.
\end{prop}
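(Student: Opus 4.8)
The plan is to induct on $d$, taking Theorem \ref{line} as the base case $d=1$ (where $(n-1)\cdot 1 - 2 = n-3$ and $(n-1)\cdot 1 - 1 = n-2$), and to run the dimension count of \cite[Proposition 2.5]{Coskun2009} with Theorem \ref{line} and the Corollary following Lemma \ref{largea} substituted for the cubic-specific inputs. I would establish the fiber estimate first and then deduce the assertion about the components of $\overline{M}_{0,0}(X,d)$. Throughout, the inductive hypothesis is that both conclusions hold in every degree $< d$.

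For the fiber estimate, fix $p \in X$ and an irreducible component $F \subseteq \mathrm{ev}_d^{-1}(p) \subseteq \overline{M}_{0,1}(X,d)$, and bound $\dim F$ according to the general member $[f\colon C \to X, q]$ of $F$. If $f$ is a free curve with irreducible domain, then $\mathrm{ev}_d$ is a smooth submersion at $[f,q]$, so $F$ is equidimensional of dimension $\dim \overline{M}_{0,1}(X,d) - n = (n-1)d-2$ regardless of $p$, which lies within both bounds. If instead $F$ lies in the boundary, its general member factors through a stratum in which the domain splits into a degree $d_1$ component carrying $q$, glued at a node to a degree $d_2 = d - d_1$ part. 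Writing $A \subseteq \overline{M}_{0,2}(X,d_1)$ for the stable maps with first marked point at $p$ (so $\dim A = \dim \mathrm{ev}_{d_1}^{-1}(p) + 1$) and $B = \overline{M}_{0,1}(X,d_2)$, the stratum is dominated by the fibered product $A \times_X B$ formed along the node; projecting to $A$ and applying the inductive fiber bound for $d_2$ over the generically general node gives
\[
\dim(A\times_X B) \le \bigl(\dim \mathrm{ev}_{d_1}^{-1}(p) + 1\bigr) + \bigl((n-1)d_2 - 2\bigr) \le (n-1)d - 3
\]
when $p \notin S$, and $\le (n-1)d - 2$ when $p \in S$; since $S$ is finite, the subcases where the node lands in $S$ only decrease this count, and longer chains are bounded by iterating the estimate.

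The remaining case is that $f$ has irreducible domain but is non-free. Here I would invoke bend and break, exactly as in Lemma \ref{spanned}: a positive-dimensional family of irreducible rational curves through the fixed point $p$ whose general member is non-free must acquire a reducible member in its closure, so $\overline F$ meets the boundary. As the boundary of $\overline{M}_{0,1}(X,d)$ is a Cartier divisor and $\overline F$ is not contained in it, $\overline F \cap \partial \overline{M}_{0,1}(X,d)$ is pure of codimension $1$ in $\overline F$, whence $\dim F = 1 + \dim\bigl(\overline F \cap \partial\bigr) \le (n-1)d-2$ for $p \notin S$ by the boundary estimate above; the same reasoning with the $p \in S$ estimate yields the bound $(n-1)d-1$ when $p \in S$. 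This proves the two fiber inequalities, the first being an equality for $p \notin S$ because free curves through such a $p$ already realize the value $(n-1)d-2$.

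Finally, for the statement on $\overline{M}_{0,0}(X,d)$, let $M$ be any component and $\mathcal C \cong M^{1} \subseteq \overline{M}_{0,1}(X,d)$ its one-pointed universal curve, of dimension $\dim M + 1$. By the Corollary following Lemma \ref{largea} the evaluation $\mathbb{P}^1 \times M \to X$ is dominant, so over a general $p \notin S$ a general fiber of $M^{1} \to X$ has dimension $\dim M + 1 - n$; being contained in $\mathrm{ev}_d^{-1}(p)$, the fiber bound forces $\dim M + 1 - n \le (n-1)d - 2$, i.e. $\dim M \le (n-1)d + n - 3$, and since the Kontsevich space has dimension at least the expected value at every point, equality holds. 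Dominance of the evaluation in characteristic $0$ then forces the general member of $M$ to be free. The main obstacle is the non-free irreducible case: one must verify that the bend-and-break degeneration can be carried out while keeping the marked point fixed at $p$, so that it genuinely produces a boundary divisor inside $\overline F$, and one must check that the special strata in which the node or $p$ itself lies in $S$ never overtake the generic count. These are precisely the points at which the finiteness of $S$ from Theorem \ref{line}.(2) and the dominance furnished by Lemma \ref{largea} enter.
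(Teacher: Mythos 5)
Your skeleton matches the argument the paper imports wholesale from \cite[Proposition 2.5]{Coskun2009}: induct on $d$, bound the reducible-domain locus of $\mathrm{ev}_d^{-1}(p)$ by the fibered-product count (your arithmetic there is correct), handle free irreducible members by smoothness of $\mathrm{ev}_d$ at free points, and reduce the remaining case to the boundary via the codimension-one property of the reducible locus. The gap is in the crux of the inductive step, the components $F$ of $\mathrm{ev}_d^{-1}(p)$ whose general member is irreducible and non-free. You assert that a positive-dimensional complete family of irreducible rational curves through the single fixed point $p$, with non-free general member, must acquire a reducible member in its closure. That is not what bend and break gives: degeneration is forced only when \emph{two} points are fixed (which is exactly how Lemma \ref{spanned} uses it), and with one fixed point there are complete positive-dimensional families of irreducible rational curves that never break --- indeed $\mathrm{ev}^{-1}(p)$ for $p \in S$ is itself a complete $(n-2)$-dimensional family of irreducible lines through $p$ with non-free general member and no reducible members, so non-freeness alone cannot rescue the claim. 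The missing step is to manufacture a second fixed point: every non-free curve of degree at most $d$ lies in a fixed proper closed subset $V \subsetneq X$ by \cite[II.3.11]{Kollar1996}, so the curves of $F$ sweep out a variety $Z$ of dimension at most $n-1$; if $\dim F$ exceeded the claimed bound, then for a general $q \in Z$ the sublocus of $F$ parametrizing curves through both $p$ and $q$ would have dimension at least $\dim F + 1 - \dim Z \ge 1$, and \emph{that} family must degenerate. Only then do you know $\overline{F}$ meets the boundary, after which your inequality $\dim F \le 1 + \dim(\overline{F}\cap\partial\overline{M}_{0,1}(X,d))$ closes the case.

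Two smaller points. First, the Corollary following Lemma \ref{largea} (and Proposition \ref{expected}) concerns components of $\Hom(\mathbb{P}^{1},X,d)$, i.e.\ maps with irreducible domain, generically birational onto their images; before applying it to an arbitrary component $M$ of $\overline{M}_{0,0}(X,d)$ you must rule out that $M$ lies in the boundary --- your fiber estimate does this, since the reducible-domain locus has dimension at most $(n-1)d+n-4$, below the universal lower bound $(n-1)d+n-3$ for components of the Kontsevich space --- and that $M$ generically parametrizes multiple covers, which needs the separate count $\dim\overline{M}_{0,0}(X,d/k)+(2k-2) < (n-1)d+n-3$ for $k \ge 2$. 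Second, the lower bound $\dim\mathrm{ev}_d^{-1}(p) \ge (n-1)d-2$ should be derived for \emph{every} $p$ from the general fiber-dimension inequality applied to the nonempty fibers of $\mathrm{ev}_d$ (nonemptiness coming from Theorem \ref{Mori} and chains of lines), not from the existence of a free curve through $p$, which is guaranteed only for general $p$.
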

We will prove the movable bend and break introduced in \cite{Lehmann2019}.
We give a proof for completeness.
\begin{thm}[Movable bend and break]\label{MBB}
Let $X$ be a del Pezzo manifold of Picard rank $1$ and dimension $n \ge 4$ with an ample generator $H$. 
Then, any free curve deforms to a chain of free lines.
\end{thm}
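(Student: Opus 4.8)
The plan is to establish the Movable Bend and Break (MBB) by induction on the $H$-degree $d$ of a free rational curve, using the dimension count from Proposition \ref{induction} together with the standard deformation theory of stable maps on the Kontsevich space. The base case $d=1$ is vacuous (a free line is already a chain of one free line), so the content is the inductive step: given a free curve $C$ of degree $d \ge 2$, I want to degenerate it to a stable map whose domain is a nodal chain $C' \cup \ell$, where $\ell$ is a free line and $C'$ is a free curve of degree $d-1$, after which the inductive hypothesis applied to $C'$ finishes the argument. The crucial point to extract from Proposition \ref{induction} is that every component $M$ of $\overline{M}_{0,0}(X,d)$ generically parametrizes free curves and has the expected dimension $(n-1)d + n - 3$; this is what makes the boundary of $M$ large enough to force a degeneration of the required combinatorial type.

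First I would recall the general principle, as in \cite[Section 6]{Lehmann2019}, that on a component $M$ of the expected dimension whose generic member is free, the locus of stable maps whose domain is reducible (the boundary $\partial \overline{M}_{0,0}(X,d)$) is a divisor, and that a general free curve can be deformed so that it degenerates into the boundary while keeping the total class fixed. Concretely, I would fix a general free curve $f\colon \mathbb{P}^1 \to X$ lying in a component $M$, and consider a general one-parameter family inside $M$ limiting to the boundary. The key is to control which boundary strata can be reached: a priori the limit could split as a chain $C_1 \cup C_2$ with $\deg C_1 = d_1$, $\deg C_2 = d_2$, $d_1 + d_2 = d$, and I must arrange that one of the two pieces is a line, i.e. $d_2 = 1$, and that both pieces are themselves free so the induction applies and no component is contracted or non-free.

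The mechanism for forcing a \emph{free line} to split off is the dimension estimate on the fibers of the evaluation map: by Proposition \ref{induction}, for $p \notin S$ the fiber $\mathrm{ev}_{d-1}^{-1}(p)$ has dimension exactly $(n-1)(d-1) - 2$, and lines through a general point are free and move in the expected family of dimension $n-3$. Attaching a free line to a free degree-$(d-1)$ curve at a general point produces a chain whose smoothing class is $d$ and whose deformations sweep out a locus of the expected dimension $(n-1)d + n - 3$; since this matches $\dim M$, such chains form a dense enough subset of the relevant boundary divisor that a general free degree-$d$ curve must in fact be a smoothing of one. Running this comparison of dimensions—deformations of the chain versus deformations of the smooth curve—is the heart of the argument, and it relies on the freeness statements so that all the $\mathrm{Hom}$-spaces have the expected dimension and the gluing/smoothing maps are dominant onto their images.

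\textbf{The main obstacle} I anticipate is verifying that the limit of a general degeneration is a chain of the \emph{desired shape}, namely that it splits off a line with both components free, rather than degenerating into a chain with a contracted component, a multiple cover, or a non-free piece supported in the exceptional locus $S$. Controlling this requires showing that the non-free and non-expected-dimensional loci (governed by the finite set $S$ and by Lemma \ref{largea}) have codimension large enough that a general one-parameter degeneration avoids them; here the fact from Lemma \ref{largea} that $X$ has no subvariety $Y$ with $a(Y,H) > a(X,H)$ is what guarantees there is no ``accumulating'' subvariety that could trap the degeneration in a bad locus. Once the shape of the limit is pinned down, the induction closes immediately: the degree-$(d-1)$ component is free by construction, hence deforms to a chain of free lines by the inductive hypothesis, and appending the split-off free line yields the desired chain of free lines for $C$.
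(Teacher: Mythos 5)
Your overall strategy (degenerate a free curve into the boundary of its component $M$, then use dimension counts to rule out bad limits) is the same as the paper's, but there is a genuine gap at the step where you force the limit to split off a \emph{line}. Your justification is that chains consisting of a free line attached to a free degree-$(d-1)$ curve sweep out a locus of dimension $(n-1)d+n-4=\dim M-1$ whose smoothings fill up the expected dimension, and that therefore such chains are ``dense enough in the relevant boundary divisor'' that the general curve of $M$ is a smoothing of one. This does not follow: a priori the smoothings of those chains could all lie in a \emph{different} irreducible component of $\overline{M}_{0,0}(X,d)$, and the boundary of your given $M$ could consist entirely of other strata. Since the irreducibility of $\overline{M}_{0,0}(X,d)$ is exactly what is being proved (using movable bend and break as input), you cannot assume the chains you construct lie in $\partial M$. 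The paper argues in the opposite direction: it takes the limit $g\colon C_1+C_2\to X$ of the given curve, lets $\Lambda$ be \emph{any} component of $\overline{M}_{0,1}(X,d_1)\times_X\overline{M}_{0,1}(X,d_2)$ containing $g$, and shows that if $\Lambda$ does not generically parametrize chains of two free curves then $\dim\Lambda\le (n-1)d+n-5=\dim M-2$, which is impossible for a boundary divisor of $M$. The conclusion is only that the limit is a chain of two free curves of \emph{some} bidegree $(d_1,d_2)$ with $d_1+d_2=d$; the reduction to a chain of free lines is then obtained by induction on degree together with \cite[Lemma 5.9]{Lehmann2019}, not by splitting off a line in a single step.

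The second issue is that the codimension estimate you correctly identify as ``the main obstacle'' is left unresolved, and the tool you point to is not the one that does the work. Lemma \ref{largea} (absence of subvarieties with large $a$-invariant) enters only indirectly, through Proposition \ref{induction}; the actual estimate in the paper splits into two cases and uses (i) Proposition \ref{induction}, which bounds the fibers of $\mathrm{ev}_{d_i}$ and shows that the reducible-domain locus costs one extra dimension, and (ii) the fact from \cite[Theorem 2.3.11]{Kollar1996} that all non-free curves of degree at most $d$ are contained in a fixed proper closed subset $V\subsetneq X$, so that a chain with a non-free irreducible component must have its node mapping into $V$, again costing one dimension. Both cases give $\dim\Lambda\le(n-1)d+n-5$. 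Without carrying out this computation your argument does not close.
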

\begin{proof}
Fix an integer $d \ge 1$.
Let $M \subset \overline{M}_{0,0}(X, d)$ be a component and we take a free curve $(C, f) \in M$.
By \cite[II.3.11 Theorem]{Kollar1996}, there is a closed subset $V \subsetneq X$ such that any non-free curve of degree at most $d$ is contained in $V$.
By Mori's bend and break lemma (e.g., \cite[II.5.5 Corollary]{Kollar1996}), the locus of stable maps with reducible domains has codimension $1$. 
Thus $f$ can be deformed to a stable map with a reducible domain $g \colon C_{1} + C_{2} \rightarrow X$.
Let $\Lambda \ni g$ be a component of $\overline{M}_{0, 1}(X, d_{1}) \times_{X} \overline{M}_{0, 1}(X,d_{2})$, where $d_{1} + d_{2} = d$.
We now assume that $\Lambda$ does not parametrize chains of two free curves.
We calculate the dimension of $\Lambda$ dividing into two cases.
\begin{enumerate}[(1)]
\item 
Suppose that any stable map in the image of $\Lambda \rightarrow \overline{M}_{0, 1}(X, d_{1})$ has a reducible domain.
Then by Proposition \ref{induction}, we have 
\begin{align*}
\dim \Lambda &\le (n - 1)d_{1} - 3 + (n - 1)d_{2} - 2 + n\\
 &= (n - 1)d + n - 5.
\end{align*}
\item
Suppose that general stable map in the image of $\Lambda \rightarrow \overline{M}_{0, 1}(X, d_{1})$ is irreducible but non-free.
Then the node $p \in C_{1} \cap C_{2}$ maps to a point in $V$.
Hence by Proposition \ref{induction}, we have 
\begin{align*}
\dim \Lambda &\le (n - 1)d_{1} - 2 + (n - 1)d_{2} - 2 + (n - 1)\\
 &= (n - 1)d + n - 5. 
\end{align*}
\end{enumerate}
Thus the codimension of $\Lambda$ in $M$ is greater than $1$, which means that general stable map with a reducible domain is a chain of two free curves.
Thus the assertion follows by \cite[Lemma 5.9]{Lehmann2019} and induction on the degree $d$. 
\end{proof}
\begin{proof}[Proof of Theorem \ref{main}]
For the case $d = 1$, we have proved it in Theorem \ref{line}.
Let $d > 1$ and let $M \subset \overline{M}_{0,0}(X, d)$ be a component.
Take a free curve $(C, f) \in M$.
Then by Theorem \ref{MBB}, $(C, f)$ deforms to a chain of free lines of length $d$.
Let $U \subset \overline{M}_{0, 0}(X, 1)$ be an open sublocus of free lines.
Then the fiber product 
\[\Delta := U' \times_{X} U'' \times_{X} \dots \times_{X} U'' \times_{X} U'\]
is the locus of chains of free lines, where $U' \subset \overline{M}_{0, 1}(X, 1)$ and $U'' \subset \overline{M}_{0, 2}(X, 1)$ are corresponding subloci.
By \cite[Lemma 5.7]{Lehmann2019}, the projections from $\Delta$ to each factor are dominant and flat.
In addition, by Theorem \ref{line}, the general fiber of the evaluation map $\mathrm{ev} \colon \overline{M}_{0, 1}(X, 1) \rightarrow X$ is irreducible.
Thus the locus $\Delta$ of chains of free lines is irreducible.
Then we see that $M$ is the unique component of $\overline{M}_{0,0}(X, d)$ containing $\Delta$, hence $\overline{M}_{0,0}(X,d) = M$.
\end{proof}
\section{Geometric Manin's Conjecture}
As a corollary of Theorem \ref{main}, we will prove Geometric Manin's Conjecture for our case.
In \cite{Lehmann2019}, the authors defined Manin components, which they propose should be counted in the conjectural asymptotic formula.
For details about this conjecture, see \cite{Tanimoto2021}.
We recall that a smooth projective variety $X$ is weak Fano if $-K_{X}$ is a nef and big divisor.
\begin{dfn}[\cite{Tanimoto2021} Definition 4.3]\label{ManinComp}
Let $X$ be a weak Fano manifold.
A generically finite morphism $f\colon Y \rightarrow X$ from a projective manifold is a breaking morphism if either
\begin{enumerate}[(i)]
\item $a(Y, -f^{*}K_{X}) > a(X, -K_{X})$, or
\item $f$ is an $a$-cover with Iitaka dimension $\kappa(Y,K_{Y}-f^{*}K_{X}) > 0$, or
\item $f$ is a face contracting morphism.
\end{enumerate}
An irreducible component $M \subset \Hom(\mathbb{P}^{1}, X)$ is an accumulating component if there is a breaking morphism $f \colon Y \rightarrow X$ and a component $N \subset \Hom(\mathbb{P}^{1}, Y)$ such that $f$ induces a dominant generically finite map $N \dashrightarrow M$.
A Manin component is a component which is not accumulating.
\end{dfn}
\begin{conj}[Geometric Manin's Conjecture (\cite{Tanimoto2021})]
Let $X$ be a weak Fano manifold.
Then there is an integer $m \ge 1$ and a nef integral $1$-cycle $\alpha \in \mathrm{Nef}_{1}(X)_{\mathbb{Z}}$ such that for any nef integral $1$-cycle $\beta \in \alpha + \mathrm{Nef}_{1}(X)_{\mathbb{Z}}$, the space $\Hom(\mathbb{P}^{1},X,\beta)$ has exactly $m$ Manin components.  
\end{conj}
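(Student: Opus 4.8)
The plan is to establish the conjecture in its sharpest form: with $\alpha = \ell$ the class of a line on $X$ and $m = 1$, so that $\Hom(\mathbb{P}^{1}, X, \beta)$ has exactly one Manin component for every admissible $\beta$. Since $X$ has Picard rank $1$, we have $\mathrm{Nef}_{1}(X) = \mathbb{R}_{\ge 0}\,\ell$, so every nef integral $1$-cycle is a multiple $d\,\ell$ and $\alpha + \mathrm{Nef}_{1}(X)_{\mathbb{Z}} = \{d\,\ell \mid d \ge 1\}$. Thus the statement reduces to proving that for each fixed degree $d \ge 1$ the space $\Hom(\mathbb{P}^{1}, X, d)$ has a single Manin component. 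Recall also that, since $-K_{X} = (n-1)H$ and $a(X, H) = n-1$, one has $a(X, -K_{X}) = 1$, which is the normalization against which all breaking maps are measured.

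First I would pin down the unique candidate. By Theorem \ref{main} together with Proposition \ref{induction}, the locus of free morphisms birational onto their image is dense in a single irreducible family. A short computation shows that any degree-$e$ multiple cover $\psi \circ \pi$ of a free morphism $\psi$ is again free: if $\psi^{*}T_{X} = \bigoplus \mathcal{O}(a_{i})$ with $a_{i} \ge 0$ then $\pi^{*}\psi^{*}T_{X} = \bigoplus \mathcal{O}(e a_{i})$ is still globally generated, so such a cover is an unobstructed point of the expected dimension $(n-1)d + n$ and hence deforms to a birational degree-$d$ map; it therefore lies in the closure of the free family rather than forming a separate component. Combined with the fact that, by the Corollary to Lemma \ref{largea}, no component of $\Hom$ is swept out by a proper subvariety, this yields that $\Hom(\mathbb{P}^{1}, X, d)$ has a single component $M_{d}$, with general member a free curve. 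It then remains to show that $M_{d}$ is not accumulating.

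Next I would rule out every breaking map. For type (i), if $f \colon Y \to X$ is generically finite then $a(Y, -f^{*}K_{X}) \le a(X, -K_{X})$: when $f$ is dominant this follows from $K_{Y} = f^{*}K_{X} + R$ with $R \ge 0$ effective, and when the image is a proper subvariety it follows from Lemma \ref{largea} together with the fact that the $a$-invariant does not increase under generically finite maps; in either case the strict inequality required by a type-(i) map is impossible. For types (ii) and (iii), a breaking map must be an $a$-cover, and by \cite[Proposition 5.15]{Lehmann2019} a nontrivial $a$-cover dominating $M_{d}$ would arise as the finite part of the Stein factorization of the evaluation map, forcing the general fiber of $\mathrm{ev}$ to be disconnected. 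For $d = 1$ this contradicts the irreducibility of the general fiber established in Theorem \ref{line}.(1); for $d \ge 2$ the same conclusion propagates from the case of lines via the movable bend and break of Theorem \ref{MBB} and the fiber-product description of chains of free lines used in the proof of Theorem \ref{main}. Hence no breaking map dominates $M_{d}$, so $M_{d}$ is Manin, and being the unique component it is the unique Manin component, giving $m = 1$.

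The step I expect to be the main obstacle is precisely the exclusion of $a$-covers underlying types (ii) and (iii). Lemma \ref{largea} controls only subvarieties, not covers, and the quoted non-existence result \cite[Theorem 11.1]{Lehmann2019a} applies only under genericity or extra dimension hypotheses, so the argument cannot rest on that input alone. The crux is therefore to convert the connectedness of the evaluation fibers of Theorem \ref{line}.(1) into the assertion that no nontrivial $a$-cover dominates $M_{d}$ in \emph{any} degree; this requires transporting the degree-one information to arbitrary $d$ through the chain-of-lines degeneration and verifying that the induced evaluation maps retain connected fibers, which is the delicate bookkeeping on which the whole reduction to $m = 1$ depends.
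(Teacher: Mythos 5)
Your overall skeleton---irreducibility of the moduli spaces plus the non-existence of breaking maps, yielding a single Manin component in each degree with $m=1$ and $\alpha=\ell$---is the same as the paper's (Theorem \ref{GMC}), and your treatment of type (i) breaking maps via the ramification formula and Lemma \ref{largea} is fine. The gap is exactly where you suspect it is: the exclusion of $a$-covers underlying types (ii) and (iii). The claim that ``a nontrivial $a$-cover dominating $M_{d}$ would arise as the finite part of the Stein factorization of the evaluation map'' is not what \cite[Proposition 5.15]{Lehmann2019} provides. That proposition runs in the opposite direction: it says that \emph{if} the evaluation map of a dominant family of free curves has disconnected general fiber, \emph{then} the Stein factorization produces an $a$-cover. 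It does not say that every $a$-cover carrying a family $N$ with $N \dashrightarrow M_{d}$ dominant and generically finite is realized as that Stein factorization; a priori $N \to M_{d}$ can have degree $\ge 2$, so the fibers of $\mathrm{ev}_{N}$ over $X$ may be disconnected while those of $\mathrm{ev}_{M_{d}}$ remain connected. Turning connectedness of $\mathrm{ev}^{-1}(p)$ into the non-existence of a dominating $a$-cover would require an additional monodromy argument (irreducibility of the fiber plus transitivity of the monodromy on $f^{-1}(p)$ forces the preimage of a general curve to split completely, which one must then contradict using ramification of $f$), and neither this argument nor a substitute appears in your proposal --- you explicitly defer it as ``delicate bookkeeping.''

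The paper closes this gap by a different route entirely: Lemma \ref{noa-cover} shows that $X$ admits \emph{no} $a$-cover at all, which trivially kills types (ii) and (iii). The proof there takes a putative $a$-cover $f\colon Y \to X$, cuts by general members of $|H|$ to reduce the computation of $\kappa(Y, K_{Y}-f^{*}K_{X})$ to the del Pezzo threefold case (where \cite[Lemma 7.2]{Lehmann2019} forces $\kappa = 2$), and then observes that a general fiber of the Iitaka fibration maps to an $(n-2)$-dimensional subvariety of $X$ with $a$-value $n-1$, which must be a linearly embedded $\mathbb{P}^{n-2}$. The contradiction comes from showing $X$ is not dominated by such $\mathbb{P}^{n-2}$'s, using the explicit equations for the fibers of $\mathrm{ev}_{1}$ obtained in the proof of Theorem \ref{line} (and \cite[Lemma 2.1]{Coskun2009}, \cite[Theorem 1.1]{Arruda2015} for $H^{n}\ge 3$): an irreducible fiber of $\mathrm{ev}_{1}$ whose lines sweep out a $\mathbb{P}^{n-2}$ would have to have degree $1$, which the equations rule out. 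You should either supply the monodromy-plus-ramification argument you are implicitly relying on, or replace that step with an argument in the spirit of Lemma \ref{noa-cover}.
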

\begin{lem}\label{noa-cover}
Let $X$ be a del Pezzo manifold of Picard rank $1$ and dimension $n \ge 4$. 
Then $X$ has no $a$-cover.
\end{lem}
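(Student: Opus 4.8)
The plan is to imitate the reduction to del Pezzo threefolds used in the proof of Lemma \ref{largea}: I would transport the $a$-cover property to a general threefold section and then invoke the classification of breaking maps for del Pezzo threefolds. So I argue by contradiction. Suppose $f \colon Y \rightarrow X$ is an $a$-cover, so that $f$ is generically finite and dominant of degree $\ge 2$ with $a(Y, f^{*}H) = a(X, H) = n-1$. Since $a$ is a birational invariant, I may assume $Y$ is smooth, and since $H$ is ample and $f$ is generically finite, $f^{*}H$ is nef and big.

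Next I would cut down by general members of $|H|$. By \cite[Theorem 3.15]{Lehmann2019a}, for general $D_{1}, \dots, D_{n-3} \in |H|$ the section $X' := X \cap D_{1} \cap \dots \cap D_{n-3}$ is a smooth del Pezzo threefold, which again has Picard rank $1$ by Grothendieck--Lefschetz since every intermediate ambient space has dimension $\ge 4$. Setting $Y' := f^{-1}(X') = Y \cap f^{*}D_{1} \cap \dots \cap f^{*}D_{n-3}$ and $f' := f|_{Y'} \colon Y' \rightarrow X'$, a general choice makes $Y'$ irreducible (Bertini applied to the pullback systems $|f^{*}D_{i}|$ on $Y$), and $f'$ is again generically finite of degree $\deg f \ge 2$, because a general $X'$ is not contained in the branch locus of $f$.

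It then remains to check that $f'$ is an $a$-cover of $X'$. Applying the adjunction behavior of the Fujita invariant from \cite[Theorem 3.15]{Lehmann2019a} to $X$ with $H$ gives $a(X', H) = a(X, H) - (n-3) = 2$, and applying it to $Y$ with the nef and big divisor $f^{*}H$ gives $a(Y', f'^{*}H) = a(Y, f^{*}H) - (n-3)$, since the cutting divisors $f^{*}D_{i}$ lie in $|f^{*}H|$ and each contributes an adjunction shift of $1$. Combining these, $a(Y', f'^{*}H) = a(Y, f^{*}H) - (n-3) = a(X, H) - (n-3) = a(X', H)$, so $f'$ is an $a$-cover of the del Pezzo threefold $X'$. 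But del Pezzo threefolds of Picard rank $1$ admit no $a$-cover by \cite[\S 6.3]{Lehmann2018}, a contradiction.

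The main obstacle is the third step, namely showing that the equality $a(Y, f^{*}H) = a(X, H)$ descends to the threefold section so that $f'$ is genuinely an $a$-cover. This requires the section formula for the Fujita invariant to hold simultaneously on $X$ and on the (a priori singular) total space $Y$, together with the Bertini irreducibility of $Y'$ and the preservation of the covering degree; these are precisely the inputs packaged in \cite[Theorem 3.15]{Lehmann2019a}. The delicate point is that $|f^{*}H|$ need not be base-point free, so I would cut $Y$ by the pullbacks $f^{*}D_{i}$ of general $D_{i} \in |H|$ rather than by general members of $|f^{*}H|$ directly, and, if the resulting $Y'$ fails to be smooth, pass to a resolution to compute $a(Y', f'^{*}H)$, using once more that $a$ is a birational invariant.
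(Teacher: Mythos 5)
Your reduction to a general del Pezzo threefold section is sound and in fact coincides with the first half of the paper's own argument (the paper also cuts by general members of $|L|$ and uses the section-ring isomorphism from \cite[Theorem 3.15]{Lehmann2019a} to show that the restriction is again an $a$-cover). The fatal problem is the last step: del Pezzo threefolds of Picard rank $1$ \emph{do} admit $a$-covers, so there is no contradiction to be reached there. For instance, on a cubic threefold there are six lines through a general point, and by \cite[Proposition 5.15]{Lehmann2019} the finite part of the Stein factorization of the evaluation map $\overline{M}_{0,1}(X,1)\to X$ is an $a$-cover; the same phenomenon occurs for every degree $1\le H^{3}\le 5$. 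The reference \cite[\S 6.3]{Lehmann2018} that you invoke only rules out \emph{subvarieties} with $a$-invariant larger than that of $X$ (which is what Lemma \ref{largea} uses it for); it says nothing about covers. Note also that \cite[Theorem 11.1]{Lehmann2019a}, quoted in the remark after Theorem \ref{line}, excludes such covers only under hypotheses that all force $\dim X\ge 4$ --- a sign that the threefold case genuinely behaves differently.

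The missing content is exactly where the hypothesis $n\ge 4$ must enter, and your proposal never uses it after the reduction. The paper uses the threefold reduction only to compute that the Iitaka dimension $\kappa(Y,K_{Y}-f^{*}K_{X})$ equals $2$ (via the proof of \cite[Lemma 7.2]{Lehmann2019}); it then returns to $Y$ itself, observes that a general fiber $Y_{0}$ of the Iitaka fibration has dimension $n-2$ with $a(Y_{0},f^{*}L)=n-1$, deduces that $f(Y_{0})\cong\mathbb{P}^{n-2}$, and finally shows that $X$ cannot be dominated by $(n-2)$-planes, using the irreducibility of the general fiber of $\mathrm{ev}_{1}$ from Theorem \ref{line} together with the explicit equations of those fibers. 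In dimension $3$ the images $f(Y_{0})$ are lines and $X$ \emph{is} covered by lines, which is precisely why no contradiction arises there; some version of this second half is indispensable.
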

\begin{proof}
Let $L$ be an ample sheaf on $X$ such that $\omega_{X} \cong L^{n-1}$.
Assume that there exists an $a$-cover $f\colon Y \rightarrow X$ with $\kappa := \kappa(Y,K_{Y} + a(Y, f^{*}L)f^{*}L)$.
By \cite[Theorem 3.15]{Lehmann2019a}, the general member $H \in |L|$ is a del Pezzo manifold of dimension $n-1$.
Since $f$ can be replaced by the composition with a birational morphism, we may assume that $H_{Y} := f^{-1}(H)$ is also smooth. 
In this situation, we obtain the isomorphism as in the proof of \cite[Theorem 3.15]{Lehmann2019a}:
\[H^{0}(Y,m(K_{Y} + a(Y,f^{*}L)f^{*}L))\rightarrow H^{0}(H_{Y},m(K_{H_{Y}} + (a(Y,f^{*}L) - 1)f^{*}L))\]
for sufficiently divisible integer $m \ge 1$.
Hence $a(H_{Y},f^{*}L) = a(Y,f^{*}L) - 1$ and $\kappa(H_{Y},K_{H_{Y}} + a(H_{Y},f^{*}L)f^{*}L) = \kappa(Y,K_{Y} + a(Y,f^{*}L)f^{*}L)$.
Thus the restriction $f|_{H_{Y}}$ is again an $a$-cover. 
Repeating this process, we eventually obtain an $a$-cover $f' \colon Y' \rightarrow X'$ to a del Pezzo threefold such that $\kappa(Y',K_{Y'} + a(Y',f^{*}L)f^{*}L) = \kappa$.
Then by the proof of \cite[Lemma 7.2]{Lehmann2019}, the Iitaka dimension $\kappa$ is equal to $2$.
\par
Let $\phi \colon Y \rightarrow W$ be the Iitaka fibration for $K_{Y} - f^{*}K_{X}$.
By the above argument, a general fiber $Y_{0}$ of $\phi$ has dimension $n-2$ and $a(Y_{0}, f^{*}L) = n-1$.
Then the image $f(Y_{0})$ also has $a(f(Y_{0}), H) = n-1$.
Let $\nu \colon \hat{Z} \rightarrow f(Y_{0})$ be the normalization.
Then we see that $(\hat{Z}, \nu^{*}H)$ is isomorphic to $(\mathbb{P}^{n-2},\mathcal{O}(1))$.
Then, in fact, $\nu$ is an isomorphism since any strict sublinear system of $|\mathcal{O}(1)|$ cannot define a morphism.
Hence $f(Y_{0})$ is isomorphic to $\mathbb{P}^{n-2}$.
Thus it suffices to prove that $X$ cannot be dominated by projective $(n-2)$-spaces.
Let $Z \subset X$ be a projective $(n-2)$-space.
For any point $p \in Z$, the space of lines in $Z$ passing through $p$ has dimension $n-3$.
Hence it is a component of the fiber of the evaluation map $\mathrm{ev}_{1} \colon \overline{M}_{0,1}(X,1) \rightarrow X$ over $p$ if $p \notin S$, where $S$ is the finite set as in Theorem \ref{line}.
However, the general fiber of $\mathrm{ev}_{1}$ is irreducible, and the lines in such a fiber do not cover $Z$ by the explicit description of the fibers as in the proof of Theorem \ref{line} for $H^{n} \neq 3$, and \cite[Lemma 2.1]{Coskun2009} for $H^{n} = 3$. 
Thus the assertion holds.
\end{proof}
\begin{thm}\label{GMC}
Let $X$ be a del Pezzo manifold of Picard rank $1$ and dimension $n \ge 4$. 
Then for any $d \ge 1$, the unique component of $\Hom (\mathbb{P}^{1}, X, d)$ is a Manin component.
\end{thm}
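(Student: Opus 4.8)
Theorem \ref{GMC} asserts that the unique component of $\Hom(\mathbb{P}^1, X, d)$ is a Manin component. The plan is to verify directly that this component is not accumulating, i.e., that there is no breaking map $f\colon Y \to X$ through which our component arises. By Definition \ref{ManinComp}, a breaking map must satisfy one of three conditions, so the strategy is to rule out each condition in turn. The crucial point is that Lemma \ref{largea} and Lemma \ref{noa-cover} have already done almost all of the work: Lemma \ref{largea} eliminates subvarieties with large $a$-invariant, and Lemma \ref{noa-cover} shows $X$ has no $a$-cover at all.

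First I would address condition (i) of Definition \ref{ManinComp}. A breaking map of this type would require a generically finite $f\colon Y \to X$ with $a(Y, -f^*K_X) > a(X, -K_X)$. If $f$ is not dominant, its image is a proper subvariety $Y' \subset X$ with $a(Y', H) > a(X, H)$ (after normalizing weights, since $-K_X = (n-1)H$), which is forbidden by Lemma \ref{largea}. If $f$ is dominant and generically finite of degree $\ge 2$, then by definition $f$ is an $a$-cover whenever the $a$-values are equal; but condition (i) requires strict inequality $a(Y, -f^*K_X) > a(X, -K_X)$, which is impossible for a dominant generically finite morphism since pulling back a pseudo-effective threshold cannot increase the $a$-invariant (this is the standard monotonicity of the Fujita invariant under dominant generically finite maps). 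Hence condition (i) never holds.

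Next, conditions (ii) and (iii) both presuppose that $f$ is an $a$-cover. But Lemma \ref{noa-cover} states precisely that $X$ has no $a$-cover. Therefore no breaking map satisfying (ii) or (iii) can exist. Combined with the previous paragraph, this shows that \emph{no} breaking map $f\colon Y \to X$ exists at all. Consequently, by the definition of an accumulating component, there can be no accumulating component of $\Hom(\mathbb{P}^1, X, d)$, so every component is a Manin component. Since Theorem \ref{main} (equivalently Proposition \ref{induction} together with the irreducibility proved there) guarantees that $\Hom(\mathbb{P}^1, X, d)$ has a unique component, that unique component is a Manin component, which is the assertion.

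The main obstacle, if any, is purely bookkeeping: one must be careful that the definitions of the invariants are stated for $L = -K_X$ versus $L = H$, and that the relation $-K_X = (n-1)H$ lets one translate between $a(X, -K_X)$ and $a(X, H)$ consistently so that Lemmas \ref{largea} and \ref{noa-cover} apply verbatim. There is no genuine geometric difficulty here—the content has been front-loaded into the two lemmas—so the proof is essentially a one-line invocation: since $X$ admits no breaking map by Lemma \ref{largea} and Lemma \ref{noa-cover}, every component is Manin, and uniqueness from Theorem \ref{main} finishes the argument.
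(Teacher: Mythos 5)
Your proposal is correct and follows the same route as the paper, which likewise deduces the nonexistence of any breaking map directly from Lemma \ref{largea} (ruling out condition (i)) and Lemma \ref{noa-cover} (ruling out conditions (ii) and (iii)), and then concludes via the irreducibility from Theorem \ref{main}. The extra details you supply on monotonicity of the Fujita invariant and the non-dominant case are accurate elaborations of what the paper leaves implicit.
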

\begin{proof}
By Lemma \ref{largea} and Lemma \ref{noa-cover}, there is no breaking morphism.
Thus any component $\Hom(\mathbb{P}^{1}, X, d)$ is a Manin component.
\end{proof}
\section*{Acknowledgments}
This paper is based on the author's master thesis \cite{Okamura2022}, which proved Theorem \ref{main} for general del Pezzo manifolds of degree $2$.
\par
I would like to thank my advisor Professor Sho Tanimoto for helpful discussions and continuous support.
I also would like to thank Professor Izzet Coskun, Professor Brian Lehmann and Professor Eric Riedl for helpful advices.
I am grateful to anonymous referees for their thoughtful comments and suggestions.
\par
The author was partially supported by JST FOREST program Grant number JPMJFR212Z and JSPS Bilateral Joint Research Projects Grant number JPJSBP120219935.
%
\begin{bibdiv}
\begin{biblist}

\bib{Abdelkerim2012}{article}{
      author={Abdelkerim, Richard},
      author={Coskun, Izzet},
       title={Parameter spaces of {S}chubert varieties in hyperplane sections of {G}rassmannians},
        date={2012},
        ISSN={0022-4049,1873-1376},
     journal={J. Pure Appl. Algebra},
      volume={216},
      number={4},
       pages={800\ndash 810},
         url={https://doi.org/10.1016/j.jpaa.2011.08.013},
      review={\MR{2864854}},
}

\bib{Batyrev1990}{article}{
      author={Batyrev, Victor~V.},
      author={Manin, Yuri~I.},
       title={Sur le nombre des points rationnels de hauteur born\'{e} des vari\'{e}t\'{e}s alg\'{e}briques},
        date={1990},
        ISSN={0025-5831,1432-1807},
     journal={Math. Ann.},
      volume={286},
      number={1-3},
       pages={27\ndash 43},
         url={https://doi.org/10.1007/BF01453564},
      review={\MR{1032922}},
}

\bib{Batyrev1998}{incollection}{
      author={Batyrev, Victor~V.},
      author={Tschinkel, Yuri},
       title={Tamagawa numbers of polarized algebraic varieties},
        date={1998},
       pages={299\ndash 340},
        note={Nombre et r\'{e}partition de points de hauteur born\'{e}e (Paris, 1996)},
      review={\MR{1679843}},
}

\bib{Beheshti2013}{article}{
      author={Beheshti, Roya},
      author={Kumar, N.~Mohan},
       title={Spaces of rational curves on complete intersections},
        date={2013},
        ISSN={0010-437X,1570-5846},
     journal={Compos. Math.},
      volume={149},
      number={6},
       pages={1041\ndash 1060},
         url={https://doi.org/10.1112/S0010437X12000504},
      review={\MR{3077661}},
}

\bib{Beheshti2022}{article}{
      author={Beheshti, Roya},
      author={Lehmann, Brian},
      author={Riedl, Eric},
      author={Tanimoto, Sho},
       title={Moduli spaces of rational curves on {F}ano threefolds},
        date={2022},
        ISSN={0001-8708,1090-2082},
     journal={Adv. Math.},
      volume={408},
       pages={Paper No. 108557, 60},
         url={https://doi.org/10.1016/j.aim.2022.108557},
      review={\MR{4456787}},
}

\bib{Beheshti2023}{article}{
      author={Beheshti, Roya},
      author={Lehmann, Brian},
      author={Riedl, Eric},
      author={Tanimoto, Sho},
       title={Rational curves on del {P}ezzo surfaces in positive characteristic},
        date={2023},
        ISSN={2330-0000},
     journal={Trans. Amer. Math. Soc. Ser. B},
      volume={10},
       pages={407\ndash 451},
         url={https://doi.org/10.1090/btran/138},
      review={\MR{4556219}},
}

\bib{Boucksom2013}{article}{
      author={Boucksom, S\'{e}bastien},
      author={Demailly, Jean-Pierre},
      author={P\u{a}un, Mihai},
      author={Peternell, Thomas},
       title={The pseudo-effective cone of a compact {K}\"{a}hler manifold and varieties of negative {K}odaira dimension},
        date={2013},
        ISSN={1056-3911,1534-7486},
     journal={J. Algebraic Geom.},
      volume={22},
      number={2},
       pages={201\ndash 248},
         url={https://doi.org/10.1090/S1056-3911-2012-00574-8},
      review={\MR{3019449}},
}

\bib{Bourqui2012}{article}{
      author={Bourqui, David},
       title={Moduli spaces of curves and {C}ox rings},
        date={2012},
        ISSN={0026-2285,1945-2365},
     journal={Michigan Math. J.},
      volume={61},
      number={3},
       pages={593\ndash 613},
         url={https://doi.org/10.1307/mmj/1347040261},
      review={\MR{2975264}},
}

\bib{Bourqui2016}{article}{
      author={Bourqui, David},
       title={Algebraic points, non-anticanonical heights and the {S}everi problem on toric varieties},
        date={2016},
        ISSN={0024-6115,1460-244X},
     journal={Proc. Lond. Math. Soc. (3)},
      volume={113},
      number={4},
       pages={474\ndash 514},
         url={https://doi.org/10.1112/plms/pdw035},
      review={\MR{3556489}},
}

\bib{Browning2017}{article}{
      author={Browning, Tim},
      author={Vishe, Pankaj},
       title={Rational curves on smooth hypersurfaces of low degree},
        date={2017},
        ISSN={1937-0652,1944-7833},
     journal={Algebra Number Theory},
      volume={11},
      number={7},
       pages={1657\ndash 1675},
         url={https://doi.org/10.2140/ant.2017.11.1657},
      review={\MR{3697151}},
}

\bib{Burke2022}{article}{
      author={Burke, Andrew},
      author={Jovinelly, Eric},
       title={Geometric {M}anin's {C}onjecture for {F}ano 3-folds},
        date={2022},
      eprint={arXiv:2209.05517v1},
}

\bib{Chen2020}{book}{
      author={Chen, Huayi},
      author={Moriwaki, Atsushi},
       title={Arakelov geometry over adelic curves},
      series={Lecture Notes in Mathematics},
   publisher={Springer, Singapore},
        date={2020},
      volume={2258},
        ISBN={978-981-15-1727-3; 978-981-15-1728-0},
         url={https://doi.org/10.1007/978-981-15-1728-0},
      review={\MR{4292529}},
}

\bib{Chung2018}{article}{
      author={Chung, Kiryong},
      author={Hong, Jaehyun},
      author={Lee, SangHyeon},
       title={Geometry of moduli spaces of rational curves in linear sections of {G}rassmannian {$Gr(2,5)$}},
        date={2018},
        ISSN={0022-4049,1873-1376},
     journal={J. Pure Appl. Algebra},
      volume={222},
      number={4},
       pages={868\ndash 888},
         url={https://doi.org/10.1016/j.jpaa.2017.05.011},
      review={\MR{3720857}},
}

\bib{Coskun2009}{article}{
      author={Coskun, Izzet},
      author={Starr, Jason},
       title={Rational curves on smooth cubic hypersurfaces},
        date={2009},
        ISSN={1073-7928,1687-0247},
     journal={Int. Math. Res. Not. IMRN},
      number={24},
       pages={4626\ndash 4641},
         url={https://doi.org/10.1093/imrn/rnp102},
      review={\MR{2564370}},
}

\bib{Franke1989}{article}{
      author={Franke, Jens},
      author={Manin, Yuri~I.},
      author={Tschinkel, Yuri},
       title={Rational points of bounded height on {F}ano varieties},
        date={1989},
        ISSN={0020-9910,1432-1297},
     journal={Invent. Math.},
      volume={95},
      number={2},
       pages={421\ndash 435},
         url={https://doi.org/10.1007/BF01393904},
      review={\MR{974910}},
}

\bib{Fulton1997}{incollection}{
      author={Fulton, W.},
      author={Pandharipande, R.},
       title={Notes on stable maps and quantum cohomology},
        date={1997},
   booktitle={Algebraic geometry---{S}anta {C}ruz 1995},
      series={Proc. Sympos. Pure Math.},
      volume={62, Part 2},
   publisher={Amer. Math. Soc., Providence, RI},
       pages={45\ndash 96},
         url={https://doi.org/10.1090/pspum/062.2/1492534},
      review={\MR{1492534}},
}

\bib{Harris2004}{article}{
      author={Harris, Joe},
      author={Roth, Mike},
      author={Starr, Jason},
       title={Rational curves on hypersurfaces of low degree},
        date={2004},
        ISSN={0075-4102,1435-5345},
     journal={J. Reine Angew. Math.},
      volume={571},
       pages={73\ndash 106},
         url={https://doi.org/10.1515/crll.2004.045},
      review={\MR{2070144}},
}

\bib{Hassett2015}{article}{
      author={Hassett, Brendan},
      author={Tanimoto, Sho},
      author={Tschinkel, Yuri},
       title={Balanced line bundles and equivariant compactifications of homogeneous spaces},
        date={2015},
        ISSN={1073-7928,1687-0247},
     journal={Int. Math. Res. Not. IMRN},
      number={15},
       pages={6375\ndash 6410},
         url={https://doi.org/10.1093/imrn/rnu129},
      review={\MR{3384482}},
}

\bib{Iskovskikh1999}{incollection}{
      author={Iskovskikh, V.~A.},
      author={Prokhorov, Yu.~G.},
       title={Fano varieties},
        date={1999},
   booktitle={Algebraic geometry, {V}},
      series={Encyclopaedia Math. Sci.},
      volume={47},
   publisher={Springer, Berlin},
       pages={1\ndash 247},
      review={\MR{1668579}},
}

\bib{Kim2001}{incollection}{
      author={Kim, B.},
      author={Pandharipande, R.},
       title={The connectedness of the moduli space of maps to homogeneous spaces},
        date={2001},
   booktitle={Symplectic geometry and mirror symmetry ({S}eoul, 2000)},
   publisher={World Sci. Publ., River Edge, NJ},
       pages={187\ndash 201},
         url={https://doi.org/10.1142/9789812799821_0006},
      review={\MR{1882330}},
}

\bib{Kollar1996}{book}{
      author={Koll\'{a}r, J\'{a}nos},
       title={Rational curves on algebraic varieties},
      series={Ergebnisse der Mathematik und ihrer Grenzgebiete. 3. Folge. A Series of Modern Surveys in Mathematics [Results in Mathematics and Related Areas. 3rd Series. A Series of Modern Surveys in Mathematics]},
   publisher={Springer-Verlag, Berlin},
        date={1996},
      volume={32},
        ISBN={3-540-60168-6},
         url={https://doi.org/10.1007/978-3-662-03276-3},
      review={\MR{1440180}},
}

\bib{Lazarsfeld2004}{book}{
      author={Lazarsfeld, Robert},
       title={Positivity in algebraic geometry. {I}},
      series={Ergebnisse der Mathematik und ihrer Grenzgebiete. 3. Folge. A Series of Modern Surveys in Mathematics [Results in Mathematics and Related Areas. 3rd Series. A Series of Modern Surveys in Mathematics]},
   publisher={Springer-Verlag, Berlin},
        date={2004},
      volume={48},
        ISBN={3-540-22533-1},
         url={https://doi.org/10.1007/978-3-642-18808-4},
        note={Classical setting: line bundles and linear series},
      review={\MR{2095471}},
}

\bib{Lehmann2022}{article}{
      author={Lehmann, Brian},
      author={Sengupta, Akash~Kumar},
      author={Tanimoto, Sho},
       title={Geometric consistency of {M}anin's conjecture},
        date={2022},
        ISSN={0010-437X,1570-5846},
     journal={Compos. Math.},
      volume={158},
      number={6},
       pages={1375\ndash 1427},
         url={https://doi.org/10.1112/s0010437x22007588},
      review={\MR{4472281}},
}

\bib{Lehmann2017}{article}{
      author={Lehmann, Brian},
      author={Tanimoto, Sho},
       title={On the geometry of thin exceptional sets in {M}anin's conjecture},
        date={2017},
        ISSN={0012-7094,1547-7398},
     journal={Duke Math. J.},
      volume={166},
      number={15},
       pages={2815\ndash 2869},
         url={https://doi.org/10.1215/00127094-2017-0011},
      review={\MR{3712166}},
}

\bib{Lehmann2019}{article}{
      author={Lehmann, Brian},
      author={Tanimoto, Sho},
       title={Geometric {M}anin's conjecture and rational curves},
        date={2019},
        ISSN={0010-437X,1570-5846},
     journal={Compos. Math.},
      volume={155},
      number={5},
       pages={833\ndash 862},
         url={https://doi.org/10.1112/s0010437x19007103},
      review={\MR{3937701}},
}

\bib{Lehmann2019a}{article}{
      author={Lehmann, Brian},
      author={Tanimoto, Sho},
       title={On exceptional sets in {M}anin's conjecture},
        date={2019},
        ISSN={2522-0144,2197-9847},
     journal={Res. Math. Sci.},
      volume={6},
      number={1},
       pages={Paper No. 12, 41},
         url={https://doi.org/10.1007/s40687-018-0174-9},
      review={\MR{3893027}},
}

\bib{Lehmann2021a}{article}{
      author={Lehmann, Brian},
      author={Tanimoto, Sho},
       title={Rational curves on prime {F}ano threefolds of index 1},
        date={2021},
        ISSN={1056-3911,1534-7486},
     journal={J. Algebraic Geom.},
      volume={30},
      number={1},
       pages={151\ndash 188},
         url={https://doi.org/10.1090/jag/751},
      review={\MR{4233180}},
}

\bib{Lehmann2022a}{article}{
      author={Lehmann, Brian},
      author={Tanimoto, Sho},
       title={Classifying sections of del {P}ezzo fibrations, {II}},
        date={2022},
        ISSN={1465-3060,1364-0380},
     journal={Geom. Topol.},
      volume={26},
      number={6},
       pages={2565\ndash 2647},
         url={https://doi.org/10.2140/gt.2022.26.2565},
      review={\MR{4521249}},
}

\bib{Lehmann2023}{article}{
      author={Lehmann, Brian},
      author={Tanimoto, Sho},
       title={Classifying sections of del {P}ezzo fibrations, {I}},
        date={2023},
     journal={J. Eur. Math. Soc. (JEMS)},
}

\bib{Lehmann2018}{article}{
      author={Lehmann, Brian},
      author={Tanimoto, Sho},
      author={Tschinkel, Yuri},
       title={Balanced line bundles on {F}ano varieties},
        date={2018},
        ISSN={0075-4102,1435-5345},
     journal={J. Reine Angew. Math.},
      volume={743},
       pages={91\ndash 131},
         url={https://doi.org/10.1515/crelle-2015-0084},
      review={\MR{3859270}},
}

\bib{Mori1982}{article}{
      author={Mori, Shigefumi},
       title={Threefolds whose canonical bundles are not numerically effective},
        date={1982},
        ISSN={0003-486X},
     journal={Ann. of Math. (2)},
      volume={116},
      number={1},
       pages={133\ndash 176},
         url={https://doi.org/10.2307/2007050},
      review={\MR{662120}},
}

\bib{Okamura2022}{article}{
      author={Okamura, Fumiya},
       title={Rational curves on del {P}ezzo manifolds of dimension at least $4$},
        date={2022},
     journal={Master's Thesis, Kumamoto University},
}

\bib{Peyre1995}{article}{
      author={Peyre, Emmanuel},
       title={Hauteurs et mesures de {T}amagawa sur les vari\'{e}t\'{e}s de {F}ano},
        date={1995},
        ISSN={0012-7094,1547-7398},
     journal={Duke Math. J.},
      volume={79},
      number={1},
       pages={101\ndash 218},
         url={https://doi.org/10.1215/S0012-7094-95-07904-6},
      review={\MR{1340296}},
}

\bib{Piontkowski1999}{article}{
      author={Piontkowski, J.},
      author={Van~de Ven, A.},
       title={The automorphism group of linear sections of the {G}rassmannians {${\bf G}(1,N)$}},
        date={1999},
        ISSN={1431-0635,1431-0643},
     journal={Doc. Math.},
      volume={4},
       pages={623\ndash 664},
      review={\MR{1719726}},
}

\bib{Riedl2019}{article}{
      author={Riedl, Eric},
      author={Yang, David},
       title={Kontsevich spaces of rational curves on {F}ano hypersurfaces},
        date={2019},
        ISSN={0075-4102,1435-5345},
     journal={J. Reine Angew. Math.},
      volume={748},
       pages={207\ndash 225},
         url={https://doi.org/10.1515/crelle-2016-0027},
      review={\MR{3918434}},
}

\bib{Shimizu2022}{article}{
      author={Shimizu, Nobuki},
      author={Tanimoto, Sho},
       title={The spaces of rational curves on del {P}ezzo threefolds of degree one},
        date={2022},
        ISSN={2199-675X,2199-6768},
     journal={Eur. J. Math.},
      volume={8},
      number={1},
       pages={291\ndash 308},
         url={https://doi.org/10.1007/s40879-021-00516-2},
      review={\MR{4389494}},
}

\bib{Takagi2012}{article}{
      author={Takagi, Hiromichi},
      author={Zucconi, Francesco},
       title={Geometries of lines and conics on the quintic del {P}ezzo 3-fold and its application to varieties of power sums},
        date={2012},
        ISSN={0026-2285,1945-2365},
     journal={Michigan Math. J.},
      volume={61},
      number={1},
       pages={19\ndash 62},
         url={https://doi.org/10.1307/mmj/1331222846},
      review={\MR{2904000}},
}

\bib{Tanimoto2021}{article}{
      author={Tanimoto, Sho},
       title={An introduction to {G}eometric {M}anin's conjecture},
        date={2021},
     journal={Proceedings of Miyako-no-Seihoku Algebraic Geometry Symposium 2021, “Positivity of tangent bundles and related topics”},
       pages={102\ndash 118},
      eprint={arXiv:2110.06660},
}

\bib{Testa2005}{book}{
      author={Testa, Damiano},
       title={The {S}everi problem for rational curves on del {P}ezzo surfaces},
   publisher={ProQuest LLC, Ann Arbor, MI},
        date={2005},
         url={http://gateway.proquest.com/openurl?url_ver=Z39.88-2004&rft_val_fmt=info:ofi/fmt:kev:mtx:dissertation&res_dat=xri:pqdiss&rft_dat=xri:pqdiss:0808059},
        note={Thesis (Ph.D.)--Massachusetts Institute of Technology},
      review={\MR{2717265}},
}

\bib{Testa2009}{article}{
      author={Testa, Damiano},
       title={The irreducibility of the spaces of rational curves on del {P}ezzo surfaces},
        date={2009},
        ISSN={1056-3911,1534-7486},
     journal={J. Algebraic Geom.},
      volume={18},
      number={1},
       pages={37\ndash 61},
         url={https://doi.org/10.1090/S1056-3911-08-00484-0},
      review={\MR{2448278}},
}

\bib{Thomsen1998}{article}{
      author={Thomsen, Jesper~Funch},
       title={Irreducibility of {$\overline{M}_{0,n}(G/P,\beta)$}},
        date={1998},
        ISSN={0129-167X,1793-6519},
     journal={Internat. J. Math.},
      volume={9},
      number={3},
       pages={367\ndash 376},
         url={https://doi.org/10.1142/S0129167X98000154},
      review={\MR{1625369}},
}

\end{biblist}
\end{bibdiv}

\end{document}